\numberwithin{equation}{section}
\newtheorem{theorem}{Theorem}[section]
\newtheorem{proposition}[theorem]{Proposition}
\newtheorem{lemma}[theorem]{Lemma}
\theoremstyle{definition}
\theoremstyle{remark}
\renewcommand{\epsilon}{\varepsilon }
\newcommand{\R}{\mathbb{R}}
\newcommand{\ds}{\displaystyle}
\renewcommand{\)}{\right)}
\newcommand{\A}{\mathcal A}
\begin{document}
\title[Infinitely many non-radial solutions to a critical equation on annulus]%
{Infinitely many non-radial solutions to a critical equation on annulus
 }

\author{  Yuxia Guo, \ \ Benniao Li, \ \   Angela Pistoia \ \ and \ \
Shusen Yan}

\address{  Department of Mathematical Science, Tsinghua University, Beijing, P.R.China}
\email{yguo@math.tsinghua.edu.cn}

\address{Department of Mathematics, The University of New England, Armidale, NSW 2351, Australia}
\email{bli9@une.edu.au}

\address{Dipartimento SBAI,  Via Scarpa 16, Roma, Italy }
\email{angela.pistoia@uniroma1.it}

\address{Department of Mathematics, The University of New England, Armidale, NSW 2351, Australia}
\email{syan@turing.une.edu.au}

\subjclass{Primary 35J91; Secondary 35B33}

\keywords {  Critical exponent, Non-radial solutions, Annulus domain}

\date{}

%\maketitle

\begin{abstract}
In this paper, we build infinitely many non-radial sign-changing solutions to the critical problem:
\begin{equation*}
\left\{\begin{array}{rlll}
-\Delta u&=|u|^{\frac{4}{N-2}}u, &\hbox{ in }\Omega,\\
u&=0,  &\hbox{ on }\partial\Omega.
\end{array}\right. \eqno(P)
\end{equation*}
on the annulus $\Omega:=\{x\in \mathbb{R}^N: a<|x|<b\}$, $N\geq 3.$
In particular, for any integer $k$ large enough, we build a non-radial  solution  which look  like the unique positive solution $u_0$  to $(P)$ crowned by $k$ negative bubbles arranged on a regular polygon with radius $r_0$ such that  $r_0^{\frac{N-2}{2}}u_0(r_0)=:\displaystyle\max_{a\leq r\leq b}r^{\frac{N-2}{2}}u_0(r).$

\end{abstract}

\maketitle
%\baselineskip 18pt

%\tableofcontents

\section{Introduction}

This paper deals with the existence of solutions to the critical  elliptic problem:
\begin{equation}\label{main}
\left\{\begin{array}{rlll}
-\Delta u&=|u|^{4\over N-2'}u, &\hbox{ in }\Omega,\\
u&=0,  &\hbox{ on }\partial\Omega,
\end{array}\right.
\end{equation}
where $\Omega$ is a bounded domain in $\mathbb R^N$ and $N\ge3.$
\vskip8pt

It is well known that the geometry of the domain $\Omega$ plays a crucial role in the solvability of the problem \eqref{main}.
Indeed, if $\Omega$ is a star-shaped domain, the classical  Pohozaev identity \cite{poho} implies that \eqref{main} does not have any solutions. While if $\Omega=\{x\in\mathbb R^N\ :\ a <|x|<b\}$ is an annulus,
Kazdan and Warner \cite{kw} found a positive solution and  infinitely many radial sign-changing solutions. Without any symmetry assumptions, the existence of solutions is a delicate issue. The first existence result is due to Coron in \cite{co} who proved that problem \eqref{main} has  a positive solution in domain $\Omega$ with a small hole. Later, Bahri and Coron in \cite{bc} proved that actually a positive solution alwasys exists as lonf as the domain
 has non-trivial homology with $\mathbb Z_2$-coefficients. However, this last condition is not necessary since   solutions to problem \eqref{main} in contractible domains have been found by   Dancer \cite{da}, Ding \cite{di}, Passaseo \cite{pa1,pa2} and
Clapp and Weth \cite{cw}.
The existence of sign-changing solutions is an even  more delicate issue and it is  known only for domains which have some symmetries or a small hole. The first existence result is due to Marchi and Pacella \cite{mapa} for symmetric domains with thin channels. Successively, Clapp and Weth \cite{cw} found  sign-changing solutions in a symmetric domain with a small hole. A first attempt to remove the symmetry assumption is due  to   Clapp and Weth \cite{cw2}, who  found a second solution to \eqref{main}  in a domain with a small hole but they were not able to say if it changes sign or not. Sign-changing solutions in a domain with a small hole have been found by Clapp, Musso and Pistoia in \cite{cmp}.
Recently, Musso and Pistoia    \cite{mp} and Ge, Musso and Pistoia \cite{gmp} (see also \cite{gmpd}) proved that in a domain (not necessarily symmetric) with a small hole  the number of sign-changing solutions to problem \eqref{main} becomes arbitrary large as the size of the hole decreases.
The existence of a large number of sign-changing solutions in a domain   with a hole of arbitrary size  is due to Clapp and Pacella in \cite{cp}, provided the domain has enough symmetry.

  \bigskip

It is largely open  for the problem of the existence of infinitely many sign-changing solutions in a general domain with non-trivial homology in the spirit of the famous Bahri and Coron's result.
  \bigskip

Here, we will focus on the existence of infinitely many sign-changing solutions to problem \eqref{main} when  $\Omega:=\{x\in \mathbb{R}^N: a<|x|<b\}$ is an annulus.
  The existence of infinitely many radial solutions   was established
by Kazdan and Warner in \cite{kw}. On the other hand,
  an annulus is invariant under many group actions and then it is natural to expect non-radial solutions which are invariant under these group actions.
 Indeed, Y.Y. Li in \cite{liyy} improved a previous result by Coffman \cite{c} and  he found for any integer $k\ge1$  in a sufficiently thin annulus  some non-radial solutions which are invariant under the action of the group
$\mathfrak G_k\times\mathfrak O(N-2),$ when $N\ge4.$  Here $\mathfrak O(N-2)$ denotes the group of orthogonal $(N-2)\times (N-2)$ matrices and    $\mathfrak G_k $ is the subgroup of  matrices which rotates $\mathbb R^2$
with angles equal to integer multiple of $2\pi\over k$.
Recently, Clapp in \cite{cl} found infinitely many non-radial solutions    which are invariant under the action of a suitable group
 whose  orbits are infinite, provided $N=4$ or $N\ge6.$

  \bigskip
  
 In this paper we prove the existence of infinitely many new non-radial solutions which  are invariant under the action of a group whose orbits are finite and they are not invariant
 under the action of the group $\mathfrak G_k\times\mathfrak O(N-2).$ Moreover, as far as we know, this is the first example of non-radial solutions in the $3-$dimensional annulus.
 \\

 Let us state our main result. Let $\Omega:=\{x\in \mathbb{R}^N: a<|x|<b\}$ be an annulus. Assume that
 \begin{equation}\label{nondegene}
 \hbox{ \it{the unique positive radial solution  $u_0$ to \eqref{main} is non-degenerate}.}
 \end{equation}
The uniqueness has been proved by Ni and Nussbaum \cite{ni}. The non-degeneracy  will be studied in Appendix A and it is true for most radii $a$ and $b$.
 Let us introduce the functions:
 $$ U_{\xi,\lambda}(y)=C_N\lambda^{\frac{N-2}{2}}\left(\frac{1}{1+\lambda^{2}|y-\xi|^2}\right)^{\frac{N-2}{2}},\  \xi,y\in\mathbb{R}^N,\ \lambda>0 $$
 which are all the positive solutions of the following critical problem on the whole space:
\begin{equation}\label{eqrn}
-\Delta U= U^{N+2\over N-2}\ \hbox{ in }\mathbb{R}^N,
\end{equation}
where $C_N$ is a constant dependent on $N$ (see \cite{A,O,T}).
We call $U_{\xi,\lambda}(y)$ the  bubble centered at the point $\xi$ with scaling parameter $\lambda.$
  Let us introduce  its projection $P U_{\xi ,\lambda}$ onto $H^1_0(\Omega),$ namely  the solution of the  Dirichlet problem:
\begin{equation}\label{projectedbubbles}
\left\{
\begin{array}{rlll}
-\Delta PU_{\xi , \lambda}&=U_{\xi ,\lambda}^{N+2\over N-2}, &\hbox{in}\ \Omega,\\
PU_{\xi , \lambda}&=0, &\hbox{on } \ \partial\Omega.
\end{array}
\right.
\end{equation}
  Let $k\ge1$ be an integer. Let us choose the centers of the bubbles as the $k$ vertices of a regular $k-$polygon with radius $r$ inside $\Omega$ as:
\begin{equation}\label{point}
\xi_j=r\xi_j^*,\ \xi_j^*:=( e^{\iota {2\pi\over k}j},{\bf{0}}), {\bf{0}}\in\mathbb{R}^{N-2}, j=1, 2, ..., k,\ r\in(a,b)
 \end{equation}
and the concentration parameter as:
\begin{equation}\label{para}
\lambda=\ell k^2,\ \ell\in [\eta ,\eta^{-1}  ]\ \hbox{for some $\eta>0$ small enough.}\end{equation}
    Finally, we introduce the space
      $$\begin{aligned}H_{s}:=&\left\{
u\in H_0^{1}(\Omega)\ :\right.\\ &  \ u(x_1,x_2, \dots,x_i,\dots,x_N)=u(x_1,x_2,\dots,-x_i,\dots,x_N),\ i=2,\dots,N,\\
&\left.
u(re^{\iota\theta},x_3,\dots,x_N)=u\left(re^{\iota\left(\theta+{2\pi\over k}j\right)},x_3,\dots,x_N\right),\ j=1,\dots, k\right\}\end{aligned}$$

Now, we can state our main result.
\begin{theorem}\label{thm:scaled} Let $\Omega:=\{x\in \mathbb{R}^N: a<|x|<b\}$ be an annulus. Assume  \eqref{nondegene}.
Then there exists an integer $k_0>0$, such that for any integer $k\geq k_0$,  problem \eqref{main}  has a solution
$$u_k(x)=u_0(x)-\sum\limits_{j=1}^k PU_{r_k\xi_j^*, \lambda_k}(x)+\varphi_k(x).$$
Where as $k\to\infty$
\begin{itemize}
\item[(i)] $r_k\to r_0\in(a,b)\ \hbox{and}\ r_0^{\frac{N-2}{2}} u_0(r_0):=\max\limits_{a\le r\le b}r^{N-2\over2} u_0(r)$
\item[(ii)] $\lambda_k/k^2\to \ell_0>0$
 \item[(iii)] $\varphi_k\in H_s$ and $\|\varphi_k\|_{H^1_0(\Omega)}\to0$
\end{itemize}
\end{theorem}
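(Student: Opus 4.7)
The plan is to construct the solution via a finite-dimensional Lyapunov--Schmidt reduction, working inside the symmetric subspace $H_s$. Writing $W_{r,\lambda}:=\sum_{j=1}^k PU_{r\xi_j^*,\lambda}$, we seek $u_k$ in the form $u_k=u_0-W_{r,\lambda}+\varphi$ with $\varphi\in H_s$ small. Since $u_0$ is radial, hence $H_s$-invariant, and since $W_{r,\lambda}\in H_s$ by the choice of the polygonal configuration, only two essential reduction parameters survive: $r$ and $\lambda$. Accordingly, the natural ``approximate kernel'' is the 2-dimensional space
$$
E_{r,\lambda}:=\mathrm{span}\Bigl\{\,\partial_r W_{r,\lambda},\ \partial_\lambda W_{r,\lambda}\,\Bigr\}\subset H_s,
$$
and one decomposes $H_s=E_{r,\lambda}\oplus E_{r,\lambda}^\perp$ with respect to the $H^1_0$-inner product.

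For the first step, I would set up the linear theory. Let $L_{r,\lambda}\varphi:=-\Delta\varphi-p|u_0-W_{r,\lambda}|^{p-1}\varphi$ where $p=\frac{N+2}{N-2}$. The proof of uniform invertibility of $L_{r,\lambda}$ on $E_{r,\lambda}^\perp\cap H_s$, with a norm bound independent of $k$, follows from the standard blow-up contradiction argument: if failure held, a rescaled limit would either satisfy $-\Delta v=p u_0^{p-1}v$ in $\Omega$ (contradicting the non-degeneracy assumption \eqref{nondegene}) or $-\Delta v=p U^{p-1}v$ in $\mathbb{R}^N$ (forcing $v$ to lie in the known kernel of the linearized bubble equation, which is filtered out by the symmetry of $H_s$ together with the orthogonality to $E_{r,\lambda}$). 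With this invertibility in hand, the nonlinear auxiliary equation
$$
\Pi_{E_{r,\lambda}^\perp}\bigl[-\Delta(u_0-W_{r,\lambda}+\varphi)-|u_0-W_{r,\lambda}+\varphi|^{p-1}(u_0-W_{r,\lambda}+\varphi)\bigr]=0
$$
is solved by Banach fixed point, yielding $\varphi=\varphi_{r,\lambda}\in E_{r,\lambda}^\perp\cap H_s$ with $\|\varphi_{r,\lambda}\|_{H^1_0}=o(1)$ uniformly for $(r,\lambda^{1/2}/k)$ in any compact set of $(a,b)\times(0,\infty)$.

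Next, I would address the bifurcation equation, which reduces to finding critical points of the reduced energy
$F_k(r,\lambda):=J(u_0-W_{r,\lambda}+\varphi_{r,\lambda})$, where $J$ is the Euler functional associated to \eqref{main}. Exploiting standard Taylor expansions for projected bubbles on a bounded domain and the interaction asymptotics between $u_0$ and each $PU_{r\xi_j^*,\lambda}$, one expects
$$
F_k(r,\lambda)=J(u_0)+k\,S_N-k\,\frac{A_1\,r^{\frac{N-2}{2}}u_0(r)}{\lambda^{\frac{N-2}{2}}}+\frac{A_2}{\lambda^{N-2}}\,k\!\!\sum_{j=2}^{k}\frac{1}{|\xi_1-\xi_j|^{N-2}}+o\Bigl(\frac{k}{\lambda^{N-2}}\Bigr),
$$
with $S_N,A_1,A_2>0$ universal. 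Under the scaling $\lambda=\ell k^2$, both the second and third terms are of order $k^{-(N-3)}$, and the polygonal sum converges to an explicit constant as $k\to\infty$. Thus $F_k$ is, to leading order in $(r,\ell)$, a smooth function whose principal part separates into a multiple of $-r^{\frac{N-2}{2}}u_0(r)\,\ell^{-\frac{N-2}{2}}$ plus $c\,\ell^{-(N-2)}$. Maximization in $r$ forces $r_k\to r_0$ with $r_0^{(N-2)/2}u_0(r_0)$ maximal, while optimization in $\ell$ gives a unique $\ell_0>0$; non-degeneracy of this critical point (assuming $r_0$ is a non-degenerate maximizer of $r\mapsto r^{(N-2)/2}u_0(r)$, which holds generically and can be checked as part of the appendix) persists under the $o(1)$ correction, yielding the desired $(r_k,\lambda_k)$.

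The main obstacle is the accuracy of the energy expansion. One must handle (i) the contribution of $\varphi_{r,\lambda}$, which requires quadratic estimates $J(u_0-W+\varphi)-J(u_0-W)=O(\|\varphi\|^2)$ with the right sharp bound on $\|\varphi\|$, (ii) the mixed interactions $\int u_0^{p-1} PU_j PU_i$, $\int u_0 W^p$, and cross-terms between different bubbles, each of which must be captured to order $k^{-(N-3)}$, and (iii) control of the Robin-function/boundary corrections from $PU_{\xi,\lambda}-U_{\xi,\lambda}$, which in dimension $N=3$ are only of order $\lambda^{-1/2}$ and thus compete with the leading-order terms. Getting these estimates with the required sharpness (and verifying that the non-degeneracy of $(r_0,\ell_0)$ is preserved in all dimensions $N\ge 3$) is the crux of the argument; everything else follows from the by-now well-developed machinery of finite-dimensional reduction for critical exponent problems.
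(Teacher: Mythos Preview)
Your overall strategy coincides with the paper's: a Lyapunov--Schmidt reduction in $H_s$ down to the two parameters $(r,\lambda)$, then locating a critical point of the reduced energy whose leading profile balances the $u_0$--bubble interaction against the bubble--bubble interaction. Two points, however, deserve correction.

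First, the paper carries out the reduction not in $H^1_0$ but in weighted $L^\infty$ norms of Wei--Yan type (the norms $\|\cdot\|_*$, $\|\cdot\|_{**}$ in Section~\ref{s2}). With a diverging number $k$ of bubbles this is not merely cosmetic: one needs invertibility bounds for the linearised operator and error estimates for $\varphi$ that are genuinely uniform in $k$, and these weighted norms are designed precisely to factor out the $k$-dependence. Your $H^1_0$ scheme may still succeed, but the uniformity in $k$ would have to be argued separately.

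Second, the coefficients in your expansion are off. The $u_0$--bubble interaction produces a term proportional to $u_0(r)\,\lambda^{-(N-2)/2}$ per bubble, with no factor $r^{(N-2)/2}$; conversely, the bubble--bubble sum $\sum_{j\ge 2}|\xi_1-\xi_j|^{-(N-2)}$ retains a factor $r^{-(N-2)}$ (since $|\xi_i-\xi_j|=r\,|\xi_i^*-\xi_j^*|$), which you dropped when writing the principal part as simply $c\,\ell^{-(N-2)}$. The paper's reduced profile is
\[
F(\ell,r)=B\,\frac{u_0(r)}{\ell^{(N-2)/2}}-\frac{C}{r^{N-2}\,\ell^{N-2}},
\]
and the quantity $r^{(N-2)/2}u_0(r)$ emerges only \emph{after} one optimises in $\ell$ and substitutes back. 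Your two slips happen to cancel and still single out the correct $r_0$, but the intermediate formula should be fixed. Incidentally, no non-degeneracy of the maximiser of $r\mapsto r^{(N-2)/2}u_0(r)$ is required: this function vanishes at $r=a,b$, so its interior maximum is strict and hence stable under $C^0$-perturbation, which is all the variational conclusion needs.
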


      The paper is inspired by  recent results  obtained by Del Pino, Musso, Pacard and Pistoia \cite{dmpp1,dmpp2}, where the authors
 constructed for any $N\geq 3$ infinitely many sign-changing solutions to \eqref{eqrn} which look  like the solution $U_{0, 1}$ crowned with $k$ negative bubbles arranged on a regular polygon with radius near $1.$
 
   \bigskip
   
 For the proof of our theorem, it relies on a Ljapunov-Schmidt procedure which allows us to reduce the problem of finding a solution to \eqref{main} whose profile at main order is $u_0-\sum\limits_{j=1}^k PU_{r\xi_j^*,\lambda}$ to a $2-$dimensional problem, namely finding the concentration parameter $\lambda>0$ in \eqref{para} and  the radius $r\in (a,b)$ of the $k-$regular polygon whose vertices are the concentration points as in \eqref{point}.
The basic outline is similar to that in \cite{dmpp1}, but we carry out  the reduction argument in a different way.  Indeed,  the invariance  by Kelvin's transform which is one of the main ingredient in the proof of \cite{dmpp1}, does not hold for problem \eqref{main}. In particular, all our estimates are more straightforward than those used in \cite{dmpp1}.
 \bigskip

This paper is organized as follows. In Section \ref{s2}  we study the linearized equation around the approximate solution and we reduce the problem to  a finite dimensional one. In Section \ref{s3} we study the reduced problem and we complete the proof of Theorem \ref{thm:scaled}.
 Appendix A is devoted to the study  of the non-degeneracy of  the positive radial solution  $u_0$.

\section{Finite-dimensional reduction}\label{s2}

Let us  introduce the   norms:
\begin{equation}\label{2.1}
\|u\|_{ *}=\sup_{y\in\R^{N}}\Big(\sum_{j=1}^{k}\frac{1}{(1+\lambda|y-\xi_{j}|)^{\frac{N-2}2+\tau}}\Big)^{-1}\lambda^{-\frac{N-2}{2}}|u(y)|
\end{equation}
and
\begin{equation}\label{2.2}
\|f\|_{ **}=\sup_{y\in\R^{N}}\Big(\sum_{j=1}^{k}\frac{1}{(1+\lambda|y-\xi_{j}|)^{\frac{N+2}2+\tau}}\Big)^{-1}\lambda^{-\frac{N+2}{2}}|f(y)|,
\end{equation}
where $\tau=\frac12$.  Since we assume that $\lambda\sim k^2$, it holds

\[
\sum_{j=2}^k \frac1{|\lambda \xi_j-\lambda \xi_1|^\tau}\le \frac{Ck}{\lambda^\tau}\le C.
\]
Set $U_j=:  U_{\xi_j,\lambda}(y),$ $ P_j=:P U_{\xi_j,\lambda}(y) $ and
 $U_*=u_0-\sum^k_jP_j.$

Denote
\begin{eqnarray*}
Z_{j,1}=\frac{\partial P_j }{\partial \lambda},\,\,\,\,\,
Z_{j,2}=\frac{\partial P_j}{\partial
r}, \ \ j=1, 2, ..., k.
\end{eqnarray*}

We consider the following linearized problem:
 \begin{equation}\label{2.3}
\left\{\begin{split}
L_k \varphi :=&-\Delta \varphi
-(2^{*}-1)|U_*|^{2^{*}-2}\varphi
=
h
+\ds\sum_{l=1}^{2}c_{l}\ds\sum_{j=1}^{k} U_j^{2^{*}-2}Z_{j,l},\,\,\,\text{in}\,\,\,\,\Omega,\\
&\varphi\in H_{s},\,\,\, \sum\limits_{j=1}^{k} \ds\int_{\Omega} U_j^{2^{*}-2}Z_{j,l}\varphi=0,\;l=1,2,
 \end{split}\right.
\end{equation}
for some real numbers $c_{l}$.

 \begin{lemma}\label{lem2.1}
 Suppose that $\varphi_{k}$ solves \eqref{2.3} for $h=h_{k}$. If $\|h_{k}\|_{ **}$ goes to zero as $k\to +\infty$, so does $\|\varphi_{k}\|_{ *}$.
 \end{lemma}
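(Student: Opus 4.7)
The plan is to proceed by contradiction, following the classical blow-up strategy for linearized problems at bubbles. Suppose there exist sequences $\varphi_k$, $h_k$ and scalars $c_{l,k}$ satisfying \eqref{2.3} with $\|h_k\|_{**}\to 0$ yet $\|\varphi_k\|_*\not\to 0$; by linearity I may normalize $\|\varphi_k\|_*=1$. The goal is then to show $\|\varphi_k\|_*\to 0$, contradicting the normalization.

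First I would control the Lagrange multipliers. Testing \eqref{2.3} against $PZ_{j',l'}$, summing in $j'$ to exploit the $\mathbb Z_k$-symmetry of the configuration, and using the orthogonality $\int U_j^{2^{*}-2}Z_{j,l}\varphi_k=0$, yields a $2\times 2$ linear system for $(c_1,c_2)$ whose matrix is, at leading order, diagonal with entries of order $\int_{\mathbb R^N}U_{0,1}^{2^{*}-2}(\partial_\lambda U_{0,1})^2$ and $\int_{\mathbb R^N}U_{0,1}^{2^{*}-2}(\partial_\xi U_{0,1})^2$. The right-hand side is bounded by $C(\|h_k\|_{**}+o(1)\|\varphi_k\|_*)$ thanks to standard estimates on $Z_{j,l}$ and on the interaction $|U_*|^{2^{*}-2}-U_j^{2^{*}-2}$ on the support of $Z_{j,l}$; hence $c_l=o(1)$.

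Next I would use the Green's representation
\[
\varphi_k(y)=\int_\Omega G_\Omega(y,z)\Bigl[(2^{*}-1)|U_*|^{2^{*}-2}\varphi_k+h_k+\sum_l c_l\sum_j U_j^{2^{*}-2}Z_{j,l}\Bigr](z)\,dz,
\]
combined with standard convolution estimates of $G_\Omega(y,z)\lesssim |y-z|^{2-N}$ against the weighted norms \eqref{2.1}--\eqref{2.2} (here the bound $\sum_{j\ge 2}|\lambda\xi_j-\lambda\xi_1|^{-\tau}\le C$ noted after \eqref{2.2} is crucial to absorb the cross-terms) to derive an estimate of the form
\[
\|\varphi_k\|_*\le o(1)+C\sum_{j=1}^{k}\sup_{|y-\xi_j|\le \delta}\lambda^{-\frac{N-2}{2}}|\varphi_k(y)|+C\sup_{y\in\Omega\setminus\bigcup_j B(\xi_j,\delta)}|\varphi_k(y)|.
\]
Thus the mass realising $\|\varphi_k\|_*=1$ must concentrate either near some $\xi_j$ or in the far region, and both alternatives must be excluded by blow-up analysis.

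The decisive step is the blow-up. In the inner region, set $\tilde\varphi_j(z)=\lambda^{-\frac{N-2}{2}}\varphi_k(\xi_j+z/\lambda)$; by the above, $|\tilde\varphi_j(z)|\le C(1+|z|)^{-\frac{N-2}{2}-\tau}$, and $\tilde\varphi_j$ satisfies, on compact sets, an equation whose limit is $-\Delta\tilde\varphi_\infty=(2^{*}-1)U_{0,1}^{\frac{4}{N-2}}\tilde\varphi_\infty$ on $\mathbb R^N$. By the well-known classification of the kernel, $\tilde\varphi_\infty$ is a linear combination of $\partial_\lambda U_{0,1}$ and $\partial_{\xi_i}U_{0,1}$, $i=1,\dots,N$. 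The symmetries inherited from $H_s$ (reflections $x_i\mapsto -x_i$ for $i=2,\dots,N$ and the $\mathbb Z_k$-rotation in $(x_1,x_2)$, which fix the ray through $\xi_j$) kill all translation modes transverse to the radial direction, while the two orthogonality conditions against $Z_{j,1},Z_{j,2}$ kill the $\lambda$- and radial-translation modes. Hence $\tilde\varphi_\infty\equiv 0$. In the far region, $\varphi_k\rightharpoonup \varphi_\infty\in H_s\cap H^1_0(\Omega)$ solving $-\Delta\varphi_\infty=(2^{*}-1)u_0^{\frac{4}{N-2}}\varphi_\infty$, which vanishes by the non-degeneracy assumption \eqref{nondegene}. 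Both conclusions contradict the existence of a maximising sequence realising $\|\varphi_k\|_*=1$.

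The main obstacle I expect is the symmetry analysis in the inner blow-up: one must verify carefully that the action of $H_s$ restricted to a $1/\lambda$-neighbourhood of $\xi_j$ transfers, in the limit, to symmetries of $\tilde\varphi_\infty$ strong enough to annihilate all the transverse translation modes. This relies on the fact that the radial direction at $\xi_j$ is the only direction fixed by the stabiliser of $\xi_j$ inside the full symmetry group defining $H_s$, so that after the two orthogonality conditions rule out the remaining modes the limit has to vanish.
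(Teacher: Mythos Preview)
Your overall architecture (contradiction, normalization $\|\varphi_k\|_*=1$, multiplier control, blow-up) matches the paper's. The essential difference is in the Green representation step, and this is where your sketch has a real difficulty.

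You invert the bare Laplacian and write $\varphi_k=\int_\Omega G_\Omega(\cdot,z)\bigl[(2^*-1)|U_*|^{2^*-2}\varphi_k+\dots\bigr]$. The paper instead uses the non-degeneracy of $u_0$ \emph{at this very step}: it inverts the operator $\tilde L_0\varphi:=-\Delta\varphi-(2^*-1)u_0^{2^*-2}\varphi$, whose Green function still obeys $|G(y,x)|\le C|y-x|^{2-N}$, and rewrites the equation with potential $(2^*-1)\bigl(|U_*|^{2^*-2}-u_0^{2^*-2}\bigr)$. That difference of potentials is controlled by $(\sum_j U_j)^{2^*-2}$, i.e.\ it is concentrated near the bubbles, and the Wei--Yan convolution lemma then yields the improved-decay inequality
\[
\|\varphi\|_*\le o(1)+\|h\|_{**}+\sup_y\frac{\sum_j(1+\lambda|y-\xi_j|)^{-\frac{N-2}{2}-\tau-\theta}}{\sum_j(1+\lambda|y-\xi_j|)^{-\frac{N-2}{2}-\tau}},
\]
which immediately forces the $*$-norm to be realised inside some $B_{R/\lambda}(\xi_j)$. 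Only \emph{one} blow-up (the inner one) is then needed; the non-degeneracy of $u_0$ has already been consumed.

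With your choice of $G_\Omega$, the potential $|U_*|^{2^*-2}$ still contains $u_0^{2^*-2}$, which is of order one everywhere on $\Omega$. Consequently the convolution against $|U_*|^{2^*-2}|\varphi_k|$ over the region away from the bubbles contributes a term comparable to $\sup_{F}|\varphi_k|$ with $F=\Omega\setminus\bigcup_j B(\xi_j,\delta)$, and the displayed inequality you wrote does not follow as stated (note also that the sum $\sum_{j=1}^k\sup_{B(\xi_j,\delta)}\lambda^{-\frac{N-2}{2}}|\varphi_k|$ has $k$ terms and cannot be small). Your plan then relies on a second, ``far'' blow-up to show $\sup_F|\varphi_k|\to 0$. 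This can be made to work, but it requires more than you indicate: as $k\to\infty$ the points $\xi_j$ densify on the circle $\{|x'|=r_0,\ x''=0\}$, so the limit $\varphi_\infty$ solves the linearised equation only on $\Omega$ minus that circle; one must invoke a removable-singularity argument (the circle has zero capacity in $\mathbb R^N$, $N\ge 3$) before the non-degeneracy of $u_0$ can be applied to conclude $\varphi_\infty\equiv 0$, and then upgrade to uniform smallness on the moving sets $F_k$.

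In short: your two-blow-up route is salvageable but heavier, and the intermediate estimate you display needs to be reformulated. The paper's device of absorbing $u_0^{2^*-2}$ into the Green operator is exactly what makes the far-region analysis disappear; your careful discussion of how the $H_s$-symmetries kill the transverse translation modes in the inner limit is, on the other hand, more explicit than the paper's one-line ``$u$ is perpendicular to the kernel''.
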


 \begin{proof}

 We   argue by contradiction. Suppose  that there exist $k\rightarrow
+\infty,\, r_k\to r_{0},  \lambda_{k}\in
[L_{0}k^{2},L_{1}k^{2}]$ and
$\varphi_{k}$ solving \eqref{2.3} for
$h=h_{k},\lambda=\lambda_{k}, r= r_{k}$ with
$\|h_{k}\|_{**}\rightarrow 0$ and $\|\varphi_{k}\|_{*}\geq c>0.$  Without loss of generality, we
may assume that $\|\varphi_{k}\|_{*}=1$. In the following, for simplicity reason, we drop the
subscript $k$.

Since we assume $u_0$ is non-degenerate, the following linear operator:

\[
\tilde{L}_0 \varphi := -\Delta \varphi - (2^*-1) u_0^{2^*-2}\varphi,\quad \varphi\in H^1_0(\Omega),
\]
is invertible. Let $G(y, x)$ be the corresponding Green's function. It is easy to prove that there exists a constant $C>0$, such that
\begin{equation}\label{green}|G(y, x)|\le \frac C{|y-x|^{N-2}}. \end{equation}  We rewrite  \eqref{2.3} as:

\begin{equation}\label{n-2.3}
\left\{\begin{array}{ll}
L_0 \varphi
=&(2^{*}-1)\bigl( |U_*|^{2^{*}-2}- u_0^{2^*-2}\bigr)\varphi+
h
+\ds\sum_{l=1}^{2}c_{l}\ds\sum_{j=1}^{k} U_j^{2^{*}-2}Z_{j,l},\,\,\,\text{in}\,\,\,\,\Omega,\vspace{0.15cm}\\
&\varphi\in H_{s},\,\,\, \sum\limits_{j=1}^{k} \ds\int_{\Omega} U_j^{2^{*}-2}Z_{j,l}\varphi=0,\;l=1,2.
 \end{array}\right.
\end{equation}
Then

\[
\varphi(y)= \int_{\Omega} G(z, y)\Bigl[ (2^{*}-1)\bigl( |U_*|^{2^{*}-2}- u_0^{2^*-2}\bigr)\varphi+
h
+\ds\sum_{l=1}^{2}c_{l}\ds\sum_{j=1}^{k} U_j^{2^{*}-2}Z_{j,l}\Bigr].
\]
Using \eqref{green}, we obtain

\[
|\varphi(y)|\le C\int_{\Omega} \frac1{|z-y|^{N-2}}\Bigl|\Bigl[ (2^{*}-1)\bigl(| U_*|^{2^{*}-2}- u_0^{2^*-2}\bigr)\varphi+
h
+\ds\sum_{l=1}^{2}c_{l}\ds\sum_{j=1}^{k} U_j^{2^{*}-2}Z_{j,l}\Bigr]\Bigr|.
\]

As in \cite{wy},
 we have
\begin{equation}\label{2.5}
\begin{split}
& \int_{\Omega} \frac1{|z-y|^{N-2}}| |U_*|^{2^{*}-2}- u_0^{2^*-2}||\varphi|\\
\le &C\int_{\Omega} \frac1{|z-y|^{N-2}} (\sum_{j=1}^k P_j)^{2^*-2}|\varphi|\\
\le & C\int_{\mathbb R^N } \frac1{|z-y|^{N-2}} (\sum_{j=1}^k U_j)^{2^*-2}|\varphi|\\
\le & C\|\varphi\|_{*}
\int_{\mathbb R^N } \frac1{|z-y|^{N-2}} (\sum_{j=1}^k U_j)^{2^*-2}\sum_{j=1}^{k}\frac{\lambda^{\frac{N-2}{2}}}{(1+\lambda|z-\xi_{j}|)^{\frac{N-2}2+\tau}}
 \\
\leq  &C\|\varphi\|_{*}\lambda^{\frac{N-2}{2}}\ds\sum_{j=1}^{m}\frac{1}{(1+\lambda|y-\xi_{j}|)^{\frac{N-2}2+\tau+\theta}}.
 \end{split}
\end{equation}

\begin{equation}\label{2.6}
\begin{split}
&\int_{\Omega} \frac{1}{|z-y|^{N-2}}|h(z)|dz\\
\le & C\|h\|_{**}\int_{\mathbb R^N} \frac{1}{|z-y|^{N-2}}\sum_{j=1}^{k}\frac{\lambda^{\frac{N+2}{2}}}{(1+\lambda|z-\xi_{j}|)^{\frac{N+2}2+\tau}}dz
\\
\leq & C\|h\|_{\alpha,**}\lambda^{\frac{N-2}{2}}\ds\sum_{j=1}^{k}\frac{1}{(1+\lambda|y-\xi_{j}|)^{\frac{N-2}2+\tau}},
 \end{split}
\end{equation}
and
\begin{equation}\label{2.7}
\begin{split}
&\ds\int_{\Omega}
\frac{1}{|z-y|^{N-2}}\Big|\ds\sum_{j=1}^{k} U_j^{2^{*}-2}Z_{j,l}\Big|dz\\
\le &
C\lambda^{\frac{N+2}{2}+n_{l}}\int_{\mathbb R^N }
\frac{1}{|z-y|^{N-2}} \sum_{j=1}^{k} \frac{1}{(1+\lambda|z-\xi_{j}|)^{N+2}}
\\
\leq &
C\lambda^{\frac{N-2}{2}+n_{l}}\ds\sum_{j=1}^{k}\frac{1}{(1+\lambda|y-\xi_{j}|)^{N-2}}\\
\le& C\lambda^{\frac{N-2}{2}+n_{l}}\ds\sum_{j=1}^{k}\frac{1}{(1+\lambda|y-\xi_{j}|)^{\frac{N-2}2+\tau}},
 \end{split}
\end{equation}
where $n_2=1$,  $n_{1}=-1.$

To estimate $c_{l},l=1,2$, multiplying the both sides of \eqref{2.3} by the function $Z_{1,l},\; (l=1,2)$ and integrating on $\Omega$, we see that $c_{l}$ satisfies:
\begin{equation}\label{2.8}
\begin{split}
&\ds\sum_{h=1}^{2}c_{h}\sum_{j=1}^{k}\int_{\Omega } U_j^{2^{*}-2}Z_{j,h}Z_{1,l}
\\
=&
\int_{\Omega }\Bigl(-\Delta \varphi
-(2^{*}-1) |U_*|^{2^{*}-2}
\varphi\Bigr)Z_{1,l}-\int_{\Omega } h Z_{1,l}.
 \end{split}
 \end{equation}

We  have

\begin{equation}\label{2.8'}
 \begin{split}
&\big|\int_{\Omega } h Z_{1,l}\big|\\
\leq &C\|h\|_{**}\ds\int_{\mathbb R^N}\frac{\lambda^{\frac{N-2}{2}+n_{l}}}{(1+\lambda|z-\xi_{1}|)^{N-2}}
\sum_{j=1}^{k}\frac{\lambda^{\frac{N+2}{2}}}{(1+\lambda|z-\xi_{j}|)^{\frac{N-2}2+\tau}}\\
\leq & C \lambda^{n_{l}}\|h\|_{ **}\Big( C + C\sum_{j=2}^{k}\frac1{(\lambda|\xi_j-\xi_1|
)^\tau}\Big)\leq C \lambda^{n_{l}}\|h\|_{ **}.
 \end{split}
\end{equation}

On the other hand, direct calculation gives
\begin{equation}\label{2.9.5}
\begin{split}
&\Big|\int_{\Omega }\Bigl(-\Delta \varphi
-(2^{*}-1) |U_*|^{2^{*}-2}
\varphi\Bigr)Z_{1,l}
  \Big|
  \\
  =&\Big|\int_{\Omega }\Bigl(-\Delta Z_{1,l}
-(2^{*}-1)| U_*|^{2^{*}-2}Z_{1,l}\Bigr)
\varphi
  \Big|\\
  =&(2^{*}-1)\Big|\int_{\Omega } \bigl(U_1^{2^*-2} - |U_*|^{2*-2}\bigr)Z_{1,l}\Big|
\varphi
  \\
  \le &  C\lambda^{n_{l}}\|\varphi\|_{ *}\int_{\Omega }\Bigl( u_0^{2^*-1} + \bigl(\sum_{j=2}^k  U_j\bigr)^{2^*-2}\Bigr) U_1
  \sum_{j=1}^{k}\frac{\lambda^{\frac{N-2}{2}}}{(1+\lambda|z-\xi_{j}|)^{\frac{N-2}2+\tau}}
  \\
  \le &O\Big(
\lambda^{n_{l}}\|\varphi\|_{*}\Bigl( \frac{1}{{\lambda^{2}}}+ \frac{1}{{\lambda^{\frac{N-2}2}}}\Bigr)\Big).
\end{split}
 \end{equation}

And it is easy to check that

 \begin{equation}\label{2.14'}
 \sum_{j=1}^{k} \int_{\Omega } U_j^{2^{*}-2}Z_{j,h}Z_{1,l}
= ( \bar{c}  +o(1)) \delta_{hl}\lambda^{2n_l},
 \end{equation}
 for some constant $\bar{c}>0$.

Now inserting  \eqref{2.14'} into
 \eqref{2.8}, we find
 \begin{equation}\label{2.16}
 c_{l}=\frac{1}{\lambda^{n_{l}}}\big(o(\|\varphi\|_{\alpha, *})+O(\|h\|_{\alpha, **})\big).
 \end{equation}
So,
 \begin{equation}\label{2.11}
\|\varphi\|_{ *}\leq
\Bigg(o(1)+\|h\|_{ **}+\frac{\sum_{j=1}^{k}\frac{1}{(1+\lambda|y-\xi_{j}|)^{\frac{N-2}2+\tau+\theta}}}
{\sum_{j=1}^{k}\frac{1}{(1+\lambda|y-\xi_{j}|)^{\frac{N-2}2+\tau}}}\Bigg).
 \end{equation}

Since $\|\varphi\|_{*}=1$, we obtain from \eqref{2.11} that there is $R>0$ such that
 \begin{equation}\label{2.12}
 \|\lambda^{\frac{N-2}{2}}\varphi\|_{L^\infty(B_{R/\lambda}(\xi_{j}))}\geq a>0,
 \end{equation}
 for some $j$. But $\tilde{\varphi}(y)=\lambda^{-\frac{N-2}{2}}\varphi(\lambda(y-x_{j}))$ converges uniformly in any compact set to a solution $u$ of
\begin{equation}\label{2.13}
-\Delta  u-(2^{*}-1)U_{0,\Lambda}^{2^{*}-2}u =0,\,\,\,\text{in}\,\,\,
\R^{N},
 \end{equation}
 for some $\Lambda\in[\Lambda_{1},\Lambda_{2}],$ where $\Lambda_1, \Lambda_2$ are two constants, and $u$ is perpendicular to the kernel of \eqref{2.13}. So $u=0$. This is a contradiction to \eqref{2.12}.

 \end{proof}

From Lemma \ref{lem2.1}, applying the same argument as in the proof of Proposition 4.1 in \cite{dfm}, we can
prove the following result:
 \begin{lemma}\label{lem2.2}
 There exist $k_{0}>0$ and a constant $C>0$ independent of $k$, such that for $k\geq k_{0}$ and
 all $h\in L^{\infty}(\R^{N})$, problem \eqref{2.3} has a unique solution $\varphi_k \equiv L_{k}(h)$. Moreover,
 \begin{equation}\label{2.10}
 \|\varphi_k\|_{ *}\leq C\|h\|_{ **},\,\,\,\,|c_{l}|\leq \frac{C}{\lambda^{n_l}}\|h\|_{**}.
 \end{equation}
  \end{lemma}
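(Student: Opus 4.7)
The plan is to deduce Lemma \ref{lem2.2} from Lemma \ref{lem2.1} by a standard Fredholm alternative argument, following the scheme of Proposition 4.1 in \cite{dfm}. The first step is to upgrade Lemma \ref{lem2.1} (a sequential statement) into a uniform a priori bound: there exist $k_0, C>0$ such that for every $k\geq k_0$ and every solution $(\varphi, c_1, c_2)$ of \eqref{2.3} one has
\[
\|\varphi\|_* \leq C\|h\|_{**}, \qquad |c_l|\leq C\lambda^{-n_l}\|h\|_{**}.
\]
The $\varphi$-estimate is obtained by contradiction and normalization: if it failed, one could extract a sequence $k_n\to\infty$ with solutions $\varphi_n$ satisfying $\|\varphi_n\|_*=1$ and $\|h_n\|_{**}\to 0$, contradicting Lemma \ref{lem2.1}. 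The $c_l$-estimate then follows from \eqref{2.16}.

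The second step is to recast \eqref{2.3} as a compact perturbation of the identity on the Hilbert subspace
\[
E:=\Bigl\{\varphi\in H_s \ :\ \int_\Omega U_j^{2^*-2}Z_{j,l}\varphi=0,\ j=1,\dots,k,\ l=1,2\Bigr\},
\]
equipped with the $H_0^1(\Omega)$ inner product. Let $\Pi$ denote the $H_0^1$-orthogonal projection onto $E$. Using \eqref{2.14'} to solve uniquely for the multipliers $c_l$ from the orthogonality conditions \eqref{2.8}, problem \eqref{2.3} becomes equivalent to the fixed-point equation on $E$
\[
\varphi = T\varphi + \tilde h, \qquad T\varphi:=(2^*-1)\,\Pi\circ(-\Delta)^{-1}\bigl(|U_*|^{2^*-2}\varphi\bigr), \qquad \tilde h:=\Pi\circ(-\Delta)^{-1}h.
\]
Since $|U_*|^{2^*-2}\in L^{N/2}(\Omega)$, the map $\varphi\mapsto |U_*|^{2^*-2}\varphi$ sends $H_0^1(\Omega)$ continuously into $L^{2N/(N+2)}(\Omega)$ by H\"older and Sobolev embedding, and $(-\Delta)^{-1}$ is then compact into $H_0^1(\Omega)$ by Rellich--Kondrachov; hence $T:E\to E$ is compact.

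The third step applies the Fredholm alternative on $E$: the operator $I-T$ is an isomorphism if and only if its kernel is trivial. Any $\varphi\in\ker(I-T)$ solves \eqref{2.3} with $h=0$, and \eqref{2.14'} forces the associated multipliers $c_l$ to vanish, so the uniform a priori bound of the first step gives $\|\varphi\|_*=0$, i.e. $\varphi=0$. Thus $I-T$ is invertible on $E$ for every $k\geq k_0$, producing the unique solution $\varphi_k=L_k(h)$. The estimates \eqref{2.10} are precisely the a priori bounds already established. The only genuinely nontrivial ingredient is the a priori estimate itself, handled in Lemma \ref{lem2.1}; the present lemma is a routine repackaging of that bound via compact operator theory, so no substantial obstacle remains.
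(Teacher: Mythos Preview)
Your proposal is correct and follows precisely the route the paper indicates: the paper's own proof consists of the single sentence ``From Lemma~\ref{lem2.1}, applying the same argument as in the proof of Proposition~4.1 in \cite{dfm}, we can prove the following result,'' and you have spelled out exactly that argument (a priori bound by contradiction, Fredholm alternative via compactness of $T$ on the constrained space). Two harmless remarks: in the definition of $E$ the paper imposes only the two summed constraints $\sum_j\int U_j^{2^*-2}Z_{j,l}\varphi=0$ rather than $2k$ separate ones, but by the $H_s$-symmetry these are equivalent; and the clause ``\eqref{2.14'} forces the associated multipliers $c_l$ to vanish'' is unnecessary, since the a~priori bound already applies to \eqref{2.3} with arbitrary multipliers and $h=0$.
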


  Now we consider the following non-linear problem:
 \begin{equation}\label{2.21}
\left\{\begin{array}{ll}
&-\Delta (U_*+\varphi)
=
|U_*+\varphi|^{2^{*}-2}(U_*+\varphi)
+\ds\sum_{l=1}^{2}c_{l}\ds\sum_{j=1}^{k} U_j^{2^{*}-2}Z_{j,l},\,\,\,\,\,\,\text{in}\,\,\Omega,\vspace{0.15cm}\\
&\varphi\in H_{s},\,\,\,\ds\int_{\Omega}
\sum\limits_{j=1}^{k} U_j^{2^{*}-2}Z_{j,l}\varphi=0,\;l=1,2.
 \end{array}\right.
\end{equation}
The main  result of this section is:

\begin{proposition}\label{prop2.3}
There exists a positive integer $k_{0}$ such that for each $k\geq
k_{0},\lambda\in[\eta k^{2},\eta^{-1}k^{2}],  r\in[a+\tau,b-\tau]$, where $\tau$ and $\eta$ are positive and  small, \eqref{2.21} has a
unique solution $\varphi=\varphi_{ r,,\lambda}\in
H_{s}$ satisfying
\begin{equation}\label{2.22}
\|\varphi\|_{*}\leq
 C
\lambda^{ -\frac {N-2}4-\sigma   },\,\,\,\,\, |c_{l}|\leq
 C
\lambda^{ -\frac {N-2}4-\sigma -n_l  },
\end{equation}
where $\sigma>0$ is a small constant.
\end{proposition}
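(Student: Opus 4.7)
The strategy is the standard contraction mapping argument: rewrite \eqref{2.21} as a fixed point problem for the inverse operator $L_k$ supplied by Lemma \ref{lem2.2}, and solve it in a small ball of the $\|\cdot\|_*$-norm in $H_s$.

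Concretely, I would expand $|U_*+\varphi|^{2^*-2}(U_*+\varphi)=|U_*|^{2^*-2}U_*+(2^*-1)|U_*|^{2^*-2}\varphi+N(\varphi)$, where
\[
N(\varphi):=|U_*+\varphi|^{2^*-2}(U_*+\varphi)-|U_*|^{2^*-2}U_*-(2^*-1)|U_*|^{2^*-2}\varphi.
\]
Setting $\ell_k:=|U_*|^{2^*-2}U_*-u_0^{2^*-1}+\sum_{j=1}^k U_j^{2^*-1}$ (the error of the approximate solution, recalling $-\Delta u_0=u_0^{2^*-1}$ and $-\Delta P_j=U_j^{2^*-1}$), problem \eqref{2.21} becomes
\[
L_k\varphi=\ell_k+N(\varphi)+\sum_{l=1}^{2}c_l\sum_{j=1}^{k}U_j^{2^*-2}Z_{j,l}.
\]
By Lemma \ref{lem2.2}, this is equivalent to the fixed-point equation $\varphi=\mathcal{A}(\varphi):=L_k\bigl(\ell_k+N(\varphi)\bigr)$, and I would apply Banach's contraction principle on the ball
\[
\mathcal{B}:=\bigl\{\varphi\in H_s:\|\varphi\|_*\le C_0\lambda^{-\frac{N-2}{4}-\sigma}\bigr\}
\]
for a suitably large constant $C_0$ and small $\sigma>0$.

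The two ingredients I would establish are the estimates
\[
\|\ell_k\|_{**}\le C\lambda^{-\frac{N-2}{4}-\sigma},\qquad \|N(\varphi_1)-N(\varphi_2)\|_{**}\le C\bigl(\|\varphi_1\|_*+\|\varphi_2\|_*\bigr)^{\min(1,2^*-2)}\|\varphi_1-\varphi_2\|_*.
\]
The nonlinear estimate is routine: one uses $||a+b|^{2^*-2}(a+b)-|a|^{2^*-2}a|\lesssim|a|^{2^*-2}|b|+|b|^{2^*-1}$ together with the elementary bound $|U_*|+|\varphi|\lesssim u_0+\sum U_j+\|\varphi\|_*\sum(1+\lambda|y-\xi_j|)^{-\frac{N-2}{2}-\tau}\lambda^{\frac{N-2}{2}}$ and the weighted convolution-type inequalities already used in the proof of Lemma \ref{lem2.1}. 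For the error $\ell_k$, I would split $\Omega$ into the regions $\Omega_j:=\{|y-\xi_j|\le\frac{1}{2}|\xi_j-\xi_i|\text{ for }i\neq j\}$ and their complement. On $\Omega_j$, only $P_j$ is comparable in size to $u_0$, so I expand
\[
|U_*|^{2^*-2}U_*+P_j^{2^*-1}=O\bigl(P_j^{2^*-2}|u_0-\sum_{i\neq j}P_i|\bigr)+O\bigl(|u_0-\sum_{i\neq j}P_i|^{2^*-1}\bigr),
\]
use $|P_j-U_j|=O(\lambda^{-(N-2)/2})$ from the standard projection estimate, and combine with $|\xi_j-\xi_i|\gtrsim 1/k\sim \lambda^{-1/2}$ so that $\sum_{i\neq j}U_i\le C\lambda^{-\tau}\sum_{i\neq j}(1+\lambda|y-\xi_i|)^{-\frac{N-2}{2}-\tau}\lambda^{\frac{N-2}{2}}$ when $y\in\Omega_j$.

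The main obstacle is obtaining the exponent $\tfrac{N-2}{4}+\sigma$ in the bound on $\|\ell_k\|_{**}$. The critical contribution comes from the cross interactions between neighbouring bubbles (and between each bubble and $u_0$): the sum $\sum_{i\neq j}U_i(y)$ evaluated in the bulk of $\Omega_j$ is controlled by $\lambda^{(N-2)/2}$ times $\sum_{i\neq j}(\lambda|\xi_j-\xi_i|)^{-(N-2)}$, and the bound $\sum_{i\neq 1}|\xi_i-\xi_1|^{-(N-2)}\lesssim k^{N-2}\lesssim\lambda^{(N-2)/2}$ combined with the dimension-dependent gain from $P_j-U_j$ should yield the claimed decay, possibly with a dimension-dependent $\sigma$. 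Once both estimates are in hand, Lemma \ref{lem2.2} gives $\|\mathcal{A}(\varphi)\|_*\le C(\|\ell_k\|_{**}+\|N(\varphi)\|_{**})$, so $\mathcal{A}$ maps $\mathcal{B}$ into itself, and the same estimates yield contractivity. The bound on $c_l$ in \eqref{2.22} then follows from the second inequality in \eqref{2.10} applied to $h=\ell_k+N(\varphi)$.
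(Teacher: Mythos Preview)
Your proposal is correct and follows essentially the same approach as the paper: rewrite \eqref{2.21} as a fixed-point problem for $L_k$ acting on $\ell_k+N(\varphi)$, establish the error bound $\|\ell_k\|_{**}\le C\lambda^{-\frac{N-2}{4}-\sigma}$ (the paper's Lemma~\ref{lem2.5}) and the superlinear estimate on $N(\varphi)$ (Lemma~\ref{lem2.4}), then run the contraction mapping in a $\|\cdot\|_*$-ball of radius $\lambda^{-\frac{N-2}{4}}$. The only minor technical difference is in the domain decomposition for the $\ell_k$ estimate: the paper splits $\Omega_1$ by the scale $|y-\xi_1|\lessgtr \lambda^{-1/2}$ rather than by nearest-bubble Voronoi regions, which makes the transition between ``$P_1$ dominates'' and ``$P_1$ is bounded'' more transparent; either choice works.
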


Rewrite \eqref{2.21} as:
\begin{equation}\label{l}
\left\{\begin{array}{ll}
&-\Delta \varphi
-(2^*-1)
|U_*|^{2^{*}-2}
\varphi=N(\varphi)+l_{k}+\ds\sum_{l=1}^{2}c_{l}\ds\sum_{j=1}^{k} U_j^{2^{*}-2}Z_{j,l},\,\,\,\,\,\,\text{in}\,\,\Omega,\\
&\varphi\in H_{s},\,\,\,\ds\int_{\Omega}
\sum\limits_{j=1}^{k} U_j^{2^{*}-2}Z_{j,l}\varphi=0,\;l=1,2,
 \end{array}\right.
\end{equation}
where
\[
 N(\varphi) =
|U_*+\varphi|^{2^{*}-2}(U_*+\varphi)- |U_*|^{2^{*}-2}U_*-(2^*-1)
|U_*|^{2^{*}-2}
\varphi,
\]
and
\[
l_{k} =  |U_*|^{2^{*}-2}U_*-u_0^{2*-1}
+\sum_{j=1}^{k} U_j^{2^{*}-1}.
\]

In order to apply the contraction mapping principle to prove that
\eqref{l} is uniquely solvable, we have to estimate $N(\varphi)$ and
$l_{k}$ respectively.

\begin{lemma}\label{lem2.4}
We have

$$
|| N(\varphi)||_{**}\leq C\|\varphi\|_{*}^{\min(2^{*}-1,2)}.
$$
\end{lemma}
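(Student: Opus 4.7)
The plan is to combine a standard pointwise Taylor-type bound on the nonlinearity $f(t):=|t|^{2^{*}-2}t$ with the definitions of the weighted norms. Writing $p:=2^{*}-1=(N+2)/(N-2)$, elementary convexity (when $p\ge 2$, i.e.\ $N\le 6$) and Hölder continuity of $f'$ of order $p-1$ (when $1<p<2$, i.e.\ $N\ge 7$) yield the pointwise estimate
\[
|N(\varphi)(y)| \le \begin{cases} C\bigl( |U_{*}(y)|^{p-2}\varphi(y)^{2} + |\varphi(y)|^{p} \bigr), & p\ge 2,\\ C|\varphi(y)|^{p}, & p<2. \end{cases}
\]
Since $p\ge 2$ corresponds to $\min(p,2)=2$ and $p<2$ to $\min(p,2)=p$, the exponents on the right are consistent with the target bound once one knows that $\|\varphi\|_{*}^{p}\le \|\varphi\|_{*}^{2}$ for small $\|\varphi\|_{*}$ whenever $p\ge 2$, so the subleading $|\varphi|^{p}$-term in the first case is absorbed.

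Next I would insert the controls $|\varphi(y)|\le \|\varphi\|_{*}\,\lambda^{(N-2)/2}\Sigma(y)$, where
\[
\Sigma(y) := \sum_{j=1}^{k}\frac{1}{(1+\lambda|y-\xi_{j}|)^{(N-2)/2+\tau}},
\]
and $|U_{*}(y)|\le u_{0}(y)+\sum_{j}P_{j}(y)\le C\bigl(1+\sum_{j}U_{j}(y)\bigr)$. Dividing by the weight $\lambda^{(N+2)/2}\sum_{j}(1+\lambda|y-\xi_{j}|)^{-((N+2)/2+\tau)}$ appearing in $\|\cdot\|_{**}$ reduces the lemma to two algebraic inequalities on sums of bubbles. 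For the pure $|\varphi|^{p}$-term the $\lambda$-powers match exactly, since $p(N-2)/2=(N+2)/2$, and the bound follows from $\bigl(\sum_{j}a_{j}\bigr)^{p}\lesssim \sum_{j}a_{j}^{p}$ together with the elementary exponent inequality $p\cdot((N-2)/2+\tau)\ge (N+2)/2+\tau$. For the mixed term $|U_{*}|^{p-2}\varphi^{2}$ the factor $U_{j}^{p-2}\sim \lambda^{(6-N)/2}(1+\lambda|y-\xi_{j}|)^{-(N-2)(p-2)}$ supplies precisely the missing $\lambda$-powers (a short computation gives $\lambda^{(6-N)/2+(N-2)} = \lambda^{(N+2)/2}$), and the resulting cross-bubble sums are tamed by the separation estimate $\sum_{j\neq 1}(\lambda|\xi_{j}-\xi_{1}|)^{-\tau}\le C$ highlighted just after \eqref{2.2}, which is valid precisely because $\lambda\sim k^{2}$.

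The main obstacle is not analytic but combinatorial bookkeeping: after expanding $(\sum_{i}U_{i})^{p-2}\Sigma(y)^{2}$ one must split the resulting triple sum into diagonal and off-diagonal contributions indexed by $(i,j_{1},j_{2})$, absorb each off-diagonal sum into a constant using the separation estimate, and check that no positive power of $k$ escapes—exactly the kind of accounting already exercised in the proof of Lemma \ref{lem2.1} and in the analogous step of \cite{wy}. Once this is verified, collecting the contributions of the two terms in the pointwise bound yields $\|N(\varphi)\|_{**}\le C\|\varphi\|_{*}^{\min(p,2)}=C\|\varphi\|_{*}^{\min(2^{*}-1,2)}$, as claimed.
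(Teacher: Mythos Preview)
Your approach is correct and essentially identical to the paper's: both split according to whether $2^*-1\le 2$ or $2^*-1>2$, use the standard Taylor-type pointwise bound $|N(\varphi)|\le C|\varphi|^{2^*-1}$ in the first case and $|N(\varphi)|\le C|U_*|^{2^*-3}\varphi^2+C|\varphi|^{2^*-1}$ in the second, then insert the weighted norms and reduce everything to the bubble-sum inequality controlled by the separation estimate $\sum_{j\ge 2}(\lambda|\xi_j-\xi_1|)^{-\tau}\le C$. The only cosmetic difference is that the paper treats $N=6$ (where $2^*-1=2$) within the first case while you place it in the second, which is immaterial.
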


\begin{proof}

If $N\ge 6$, then $2^*-2\le 1$. So we have
$$
|N(\varphi)|\leq
C|\varphi|^{2^{*}-1},
 $$
which gives

\[
 \begin{split}
 |N(\varphi)| \leq & C\|\varphi\|_{ *}^{2^{*}-1}
 \Big(\sum_{j=1}^{k}\frac{\lambda^{\frac{N-2}{2}}}{(1+\lambda|y-\xi_{j}|)^{\frac{N-2}2+\tau}}\Big)^{2^{*}-1} \\
\leq&
 C\|\varphi\|_{*}^{2^{*}-1}\lambda^{\frac{N+2}{2}}\sum_{j=1}^{k}\frac{1}{(1+\lambda|y-\xi_{j}|)^{\frac{N+2}{2}+\tau}}
 \Big(\sum_{j=1}^{k}\frac{1}{(1+\lambda|y-\xi_{j}|)^{\tau}}\Big)^{\frac{4}{N-2}}\\
 \leq&  C\|\varphi\|_{*}^{2^{*}-1}\lambda^{\frac{N+2}{2}}\sum_{j=1}^{k}\frac{1}{(1+\lambda|y-\xi_{j}|)^{\frac{N+2}{2}+\tau}}.
 \end{split}
 \]
 Therefore,
 $$
 \|N(\varphi)\|_{**}\leq C\|\varphi\|_{*}^{2^{*}-1}.
 $$

Similarly, if $3\le N\le 5$, then $2^*-3>0$. In view of $U_j\ge c_0>0$ in $\Omega$, we find
\begin{eqnarray*}
|N(\varphi)|&\leq& C \bigl(|u_0|^{2^*-3}+(\sum_{j=1}^k U_j)^{2^*-3}\bigr)
\varphi^{2}+
C|\varphi|^{2^*-1}
\\
&\leq&
 C(\|\varphi\|_{*}^{2}+\|\varphi\|_{*}^{2^*-1})\lambda^{\frac{N+2}{2}}
 \Big(\sum_{j=1}^{k}\frac{1}{(1+\lambda|y-\xi_{j}|)^{\frac{N-2}{2}+\tau}}\Big)^{2^*-1}\\
 &\leq&  C\|\varphi\|_{*}^{2}\sum_{j=1}^{k}\frac{\lambda^{\frac{N+2}{2}}}{(1+\lambda|y-\xi_{j}|)^{\frac{N+2}{2}+\tau}}.
\end{eqnarray*}

\end{proof}

Next, we estimate $l_{k}.$
\begin{lemma}\label{lem2.5}
There is a constant $\sigma>0$, such that
$$
\|l_{k}\|_{**}\leq C
\lambda^{ -\frac {N-2}4-\sigma   }.
$$

\end{lemma}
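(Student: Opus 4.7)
The plan is to estimate $l_k$ pointwise on a Voronoi-type partition $\{\Omega_j\}_{j=1}^k$ of $\Omega$ around the bubble centres (where $\Omega_j$ is the set of points closer to $\xi_j$ than to any other $\xi_i$) together with the complementary far region, and then to read off the weighted norm $\|l_k\|_{**}$ from these pointwise bounds. On $\Omega_j$ the dominant part of $U_*=u_0-\sum_iP_i$ is $-P_j$; on the far region every bubble is of size $O(\lambda^{-(N-2)/2})$.

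On $\Omega_j$ I would write $U_*=-P_j+R_j$ with $R_j:=u_0-\sum_{i\ne j}P_i$ and use the Taylor expansion of $s\mapsto|s|^{2^*-2}s$ about $s=-P_j$,
\[
|U_*|^{2^*-2}U_*=-P_j^{2^*-1}+(2^*-1)P_j^{2^*-2}R_j+E_j,
\]
with $|E_j|\le C|R_j|^{2^*-1}$ for $N\ge 6$ and $|E_j|\le C(P_j^{2^*-3}R_j^2+|R_j|^{2^*-1})$ for $3\le N\le 5$, exactly as in Lemma \ref{lem2.4}. Since the harmonic function $U_j-P_j$ satisfies $0\le U_j-P_j\le C\lambda^{-(N-2)/2}$ on $\overline{\Omega}$ by the maximum principle, I can replace $P_j$ by $U_j$ throughout at a cost of $O(U_j^{2^*-2}\lambda^{-(N-2)/2})$. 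After cancelling the $\pm U_j^{2^*-1}$ contributions and absorbing $-u_0^{2^*-1}+\sum_{i\ne j}U_i^{2^*-1}$ into the remaining terms, I obtain pointwise on $\Omega_j$
\[
|l_k(y)|\le C\Bigl[U_j(y)^{2^*-2}\Bigl(\lambda^{-(N-2)/2}+u_0(y)+\sum_{i\ne j}U_i(y)\Bigr)+|E_j(y)|\Bigr].
\]

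I would then translate each piece into $\|\cdot\|_{**}$ using $U_j(y)\sim\lambda^{(N-2)/2}(1+\lambda|y-\xi_j|)^{-(N-2)}$ and the weight $w(y)=\lambda^{(N+2)/2}\sum_j(1+\lambda|y-\xi_j|)^{-(N+2)/2-\tau}$. The projection piece $U_j^{2^*-2}\lambda^{-(N-2)/2}$ and the self-interaction $U_j^{2^*-2}u_0$ compare favourably with $w$: both produce ratios of order $\lambda^{(2-N)/2}$ at the bubble centre, strictly better than $\lambda^{-(N-2)/4}$. The cross-bubble piece is controlled by the pointwise inequality $U_i(y)\le C\lambda^{(N-2)/2}(\lambda|\xi_i-\xi_j|)^{-(N-2)}$ on $\Omega_j$ together with the geometric sum $\sum_{i\ne j}(\lambda|\xi_i-\xi_j|)^{-\tau}\le C$, which holds precisely because $\lambda\sim k^2$ and the $\xi_j$ lie on a regular $k$-gon (this is the estimate displayed just after \eqref{2.2}). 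The nonlinear remainder $E_j$ is uniformly $O(1)$ on $\Omega_j$ (with an $O(\log k)$ factor only when $N=3$, coming from $\sum_m m^{-(N-2)}$), and the worst ratio $|E_j|/w$ is attained at the cell boundary $|y-\xi_j|\sim 1/k\sim\lambda^{-1/2}$, where a direct computation yields $w\sim\lambda^{(N+1)/4}$ and hence
\[
\sup_{\Omega_j}\frac{|E_j|}{w}\le C\lambda^{-(N+1)/4}=C\lambda^{-(N-2)/4-3/4},
\]
matching the claimed exponent with $\sigma=\tfrac34$ (and slightly smaller when $N=3$ to absorb the logarithm). On the far region a direct pointwise estimate gives the much smaller $O(\lambda^{-(N+1)/2})$ contribution.

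The main technical obstacle is the last power-counting step: one must simultaneously track the three small parameters $\lambda^{-(N-2)/2}$ (projection error), $\lambda^{-1/2}\sim 1/k$ (rescaled inter-bubble distance) and $\lambda^{-\tau}$ (loss in the weight), and check that their combination is strictly below $\lambda^{-(N-2)/4}$. The choice $\tau=\tfrac12$ is exactly the one that balances the loss $\lambda^{-\tau}$ in the weight against the neighbour sum $\sum_m m^{-\tau}\sim k^{1-\tau}$ forced by the $k$-polygon arrangement; any other $\tau$ either destroys the boundedness of the geometric sum or sacrifices too much decay to produce a strictly negative exponent. Once this balance is verified, the remaining pointwise bookkeeping is of the standard type carried out in \cite{wy,dfm}.
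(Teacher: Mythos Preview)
Your strategy is essentially the paper's: decompose $l_k=J_1+J_2$ with $J_2=\sum_j(U_j^{2^*-1}-P_j^{2^*-1})$, restrict by symmetry to one sector $\Omega_1$, split it into a near ball $S=\Omega_1\cap B_{1/\sqrt\lambda}(\xi_1)$ and its complement, and on $S$ expand $|U_*|^{2^*-2}U_*$ about the dominant bubble.  The paper's bound $|J_1|\le P_1^{2^*-2}(u_0+\sum_{j\ge2}P_j)+J_3$ with $J_3=O(1)$ on $S$ is exactly your linear term $(2^*-1)P_j^{2^*-2}R_j$ plus remainder $E_j$, and the power--counting that follows is the same.

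Two points in your write-up need repair.  First, the $\Omega_j$ as you define them (full Voronoi cells) already partition $\Omega$, so there is no ``complementary far region''; moreover they extend in the radial and $x''$ directions far beyond $|y-\xi_j|\sim 1/k$.  Your claim that the worst ratio $|E_j|/w$ occurs at the cell boundary $|y-\xi_j|\sim 1/k$ is therefore unjustified: on the full cell $1/w$ keeps growing as $|y-\xi_j|\to\mathrm{diam}(\Omega)$, and an $O(1)$ bound for $E_j$ is then useless.  You must truncate at scale $1/\sqrt\lambda$ (the paper's $S$) and on $\Omega_1\setminus S$ use instead that $P_1\le C$, which yields the cruder but adequate bound $|J_1|\le C\sum_j U_j$.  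Second, for $3\le N\le 5$ the Taylor remainder contains $P_j^{2^*-3}R_j^2$, and near the centre $P_j\sim\lambda^{(N-2)/2}$, so $E_j\sim\lambda^{(6-N)/2}$ is \emph{not} uniformly $O(1)$ on $S$.  The ratio $E_j/w\sim\lambda^{2-N}$ there is still well below $\lambda^{-(N-2)/4}$, so the conclusion survives, but you need to check the ratio at both the centre and the boundary of $S$ rather than assert $E_j=O(1)$ and look only at the boundary.
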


\begin{proof}

Write

\[
\begin{array}{ll}
l_{k} &= \Bigl[ |U_*|^{2^{*}-2}U_*-u_0^{2^*-1}
+\sum_{j=1}^{k} P_j^{2^{*}-1} \Bigr] + \sum_{j=1}^{k}\bigl( U_j^{2^*-1}- P_j^{2^{*}-1}\bigr)\\
&=: J_1+ J_2.
\end{array}
\]

First, we estimate  $\|J_2\|_{**}$.  We have

\[
0\le  U_j^{2^*-1}- P_j^{2^{*}-1}\le \frac{C U_j^{2^*-2}}{\lambda^{\frac{N-2}2}}.
 \]
Let us determine the number $\alpha>0$, such that

\[
\frac{C U_j^{2^*-2}}{\lambda^{\frac{N-2}2}}\le \frac{ C\lambda^{-\alpha}\lambda^{\frac{N+2}{2}}}{(1+\lambda|y-\xi_{j}|)^{\frac{N+2}{2}+\tau}}.
\]
The above inequality is equivalent to

\[
(1+\lambda|y-\xi_{j}|)^{\frac{N+2}{2}+\tau-4 }\le  C\lambda^{-\alpha}\lambda^{N-2}.
\]
Note that $\tau=\frac12$. We find that $\frac{N+2}{2}+\tau-4\ge 0$ if $N\ge 5$. In view of $1+\lambda|y-x_{j}|\le C\lambda$ in $\Omega$. We have

\[
(1+\lambda|y-\xi_{j}|)^{\frac{N+2}{2}+\tau-4 }\le C\lambda^{\frac{N+2}{2}+\tau-4 }= C  \lambda^{- \frac{N-1}2  } \lambda^{N-2}.
\]
As an result,  $\alpha=\frac{N-1}2$.  Thus, we get

\begin{equation}\label{n1-28-2}
\| J_2\|_{**}\le C\lambda^{- \frac{N-1}2  }, \quad \hbox{ if } N\ge 5.
\end{equation}

If $N\le 5$, it holds $\frac{N+2}{2}+\tau-4<0$. Thus

\[
(1+\lambda|y-\xi_{j}|)^{\frac{N+2}{2}+\tau-4 }\le C = C\lambda^{2-N}\lambda^{N-2}.
\]
So $\alpha=N-2$. Hence, we obtain

\begin{equation}\label{n2-28-2}
\| J_2\|_{**}\le C\lambda^{2-N  }, \quad \hbox{ if }N\le 5.
\end{equation}

In order to  estimate  $\|J_1\|_{**}$. We define
$$
\Omega_{j}=\Big\{y:y=(y',y'')\in \R^{2}\times \R^{N-2},\Big\langle \frac{y'}{|y'|},\frac{\xi'_{j}}{|\xi_{j}|}\Big\rangle\geq \cos\frac{\pi}{k}\Big\}.
$$
Using the assumed symmetry, we just need to estimate $J_1$ in $\Omega_1$.  Let  $S= \Omega_1\cap B_{1/\sqrt\lambda}(\xi_1)$.

Note that, it holds $P_1\ge c_0>0$ in $S$, and
$$|U_*|^{2^*-2}U_*=|u_0-\sum_{j=2}^k P_j-P_1|^{2^*-2}(u_0-\sum_{j=2}^k P_j-P_1),$$ we have
\[
|J_1|\leq  P_1^{2^*-2} \bigl( u_0 +\sum_{j=2}^k P_j) + J_3,
\]
where $|J_3|\le C$ in $S$.

Since
\[
\frac{\lambda^{\frac{N+2}2}}{ (1+ \lambda|y-\xi_1|)^{\frac{N+2}2+\tau}}\ge \frac{\lambda^{\frac{N+2}2}}{ (1+ \sqrt\lambda)^{\frac{N+2}2+\tau}}\ge a_0\lambda^{\frac{N+2}4-\frac\tau 2},\quad y\in S,
\]
it holds

\[
|J_3| \le C\lambda^{-\frac{N+2}4+\frac\tau 2}\frac{\lambda^{\frac{N+2}2}}{ (1+ \lambda|y-\xi_1|)^{\frac{N+2}2+\tau}},\quad y\in S.
\]

On the other hand

\[
|P_1^{2^*-2} \bigl( u_0 +\sum_{j=2}^k P_j)|\le C U_1^{2^*-2},
\]
and if $N\ge 5$,

\[
(1+\lambda|y-\xi_1|)^{\frac{N+2}2+\tau -4}\le C\lambda^{\frac12 (\frac{N+2}2+\tau -4)},
\]
which  gives

\[
|P_1^{2^*-2} \bigl( u_0 +\sum_{j=2}^k P_j)|\le C U_1^{2^*-2}\le\lambda^{ -\frac {N+2}4+\frac\tau2   } \frac{\lambda^{\frac{N+2}2}}{ (1+ \lambda|y-\xi_1|)^{\frac{N+2}2+\tau}},\quad y\in S.
\]

If $N=3, 4$,
\[
(1+\lambda|y-\xi_1|)^{\frac{N+2}2+\tau -4}\le C,
\]
which  gives

\[
|P_1^{2^*-2} \bigl( u_0 +\sum_{j=2}^k P_j)|\le C U_1^{2^*-2}\le\lambda^{ -\frac {N-2}2  } \frac{\lambda^{\frac{N+2}2}}{ (1+ \lambda|y-\xi_1|)^{\frac{N+2}2+\tau}},\quad y\in S.
\]
Therefore, we have proved

\begin{equation}\label{99-3-3}
|J_1|\le C \lambda^{ -\frac {N-2}4-\sigma   } \frac{\lambda^{\frac{N+2}2}}{ (1+ \lambda|y-\xi_1|)^{\frac{N+2}2+\tau}},\quad y\in S.
\end{equation}

On the other hand, we note that, in $\Omega_1\setminus S$, it holds $P_1\le C$. Thus

\[
\begin{split}
|J_1 |\le &C \sum_{j=1}^k U_j\\
\le & \frac{C}{\lambda^{\frac{N-2}2}|y-\xi_1|^{N-2}}+ \frac{C}{\lambda^{\frac{N-2}2}|y-\xi_1|^{N-2-\tau}}\sum_{j=2}^k\frac1{|\xi_j-\xi_1|^\tau}\\
\le &\frac{C}{\lambda^{\frac{N-2}2-\tau}|y-\xi_1|^{N-2-\tau}}.
\end{split}
\]
Now we determine $\beta>0$, such that

\begin{equation}\label{100-3-3}
\frac{1}{\lambda^{\frac{N-2}2-\tau}|y-\xi_1|^{N-2-\tau}}\le C \lambda^{-\beta}\frac{\lambda^{\frac{N+2}2}}{ (1+ \lambda|y-\xi_1|)^{\frac{N+2}2+\tau}}, \quad y\in  \Omega_1\setminus S.
\end{equation}

It holds

\[
\frac{\lambda^{\frac{N+2}2}}{ (1+ \lambda|y-\xi_1|)^{\frac{N+2}2+\tau}}\ge \frac{c'}{  \lambda^\tau |y-\xi_1|^{\frac{N+2}2+\tau}}, \quad y\in  \Omega_1\setminus S.
\]
So \eqref{100-3-3} holds if

\[
\frac{1}{\lambda^{\frac{N-2}2-\tau}|y-\xi_1|^{N-2-\tau}}\le \frac{C\lambda^{-\beta}}{  \lambda^\tau |y-\xi_1|^{\frac{N+2}2+\tau}}, \quad y\in  \Omega_1\setminus S.
\]
which is equivalent to

\begin{equation}\label{101-3-3-0}
C|y-\xi_1|^{  N-2-2\tau -\frac{N+2}2 } \ge  \lambda^{ \beta +2\tau - \frac{N-2}2 }, \quad y\in  \Omega_1\setminus S.
\end{equation}
Since $|y-\xi_1|\ge \frac1{\sqrt\lambda}$, we can take

\[
\beta=\frac{N-2}2-2\tau -\frac12 ( N-2-2\tau -\frac{N+2}2)=\frac{N+2}4-\tau,
\]
if $N-2-2\tau -\frac{N+2}2\ge 0$. That is $N\ge 8$.  If $N\le 8$, we can take $\beta= \frac{N-2}2-2\tau$.

So, we have proved

\begin{equation}\label{101-3-3}
|J_1|\le C \lambda^{-\frac{N-2}4-\sigma}\frac{\lambda^{\frac{N+2}2}}{ (1+ \lambda|y-\xi_1|)^{\frac{N+2}2+\tau}}, \quad y\in  \Omega_1\setminus S.
\end{equation}

Combining \eqref{99-3-3} and \eqref{101-3-3}, we find that there exists $\sigma>0$, such that

\begin{equation}\label{102-3-3}
|J_1|\le C \lambda^{ -\frac {N-2}4-\sigma   } \frac{\lambda^{\frac{N+2}2}}{ (1+ \lambda|y-\xi_1|)^{\frac{N+2}2+\tau}},\quad y\in \Omega_1.
\end{equation}
This gives

\[
\|J_1\|_{**}\le  C \lambda^{ -\frac {N-2}4-\sigma   }.
\]

\end{proof}

Now we are ready to the proof of Proposition~\ref{prop2.3}.

\begin{proof} [ Proof  of  Proposition~\ref{prop2.3}.]

First we recall that $\lambda\in[\eta k^2 ,\eta^{-1} k^2] $ for some $\eta>0.$
Set
$$
\mathcal {N}=\Big\{w:w\in
C(\R^{N})\cap H_{s},\|w\|_{*}\leq \frac{1}{\lambda^{\frac{N-2}4}}, \int_{\Omega} \sum_{j=1}^k U_j^{2^{*}-2}Z_{j,l} w=0\Big\},
$$
where $l=1,2.$
Then \eqref{l} is equivalent to

\begin{equation}\label{A}
\varphi=\mathcal
{A}(\varphi)=:L_{k}(N(\varphi))+L_{k}(l_{k}),
\end{equation}
here $L_{k}$ is defined in Lemma \ref{lem2.2}.
We will prove that $\mathcal {A}$ is a contraction map from
$\mathcal {N}$ to $\mathcal {N}.$

First, we have
$$
\begin{array}{ll}
\|\mathcal {A}(\varphi)\|_{*}&\leq
C\|N(\varphi)\|_{**}+C\|l_{k}\|_{**}\\
&\leq
C\|\varphi\|_{*}^{\min\{2^{*}-1,2\}}+C\frac{1}{\lambda^{\frac{N-2}4+\sigma}}\\
 &\leq
\frac{1}{\lambda^{\frac{N-2}4}}.
\end{array}
$$
Hence, $\mathcal {A}$ maps $\mathcal {N}$ to $\mathcal {N}.$

On the other hand, we see
\begin{eqnarray*}
\begin{array}{ll}
\|\mathcal {A}(\varphi_{1})-\mathcal
{A}(\varphi_{2})\|_{*}&=\|L_{k}(N(\varphi_{1}))-L_{k}(N(\varphi_{2}))\|_{*}\\
&\leq C\|N(\varphi_{1})-N(\varphi_{2})\|_{**}.\end{array}
\end{eqnarray*}
It is easy to check that if $N\geq 6,$ then
\begin{eqnarray*}
&&|N(\varphi_{1})-N(\varphi_{2})|\\
&\leq& |N'(\varphi_{1}+\theta\varphi_{2})||\varphi_{1}-\varphi_{2}|\\
&\leq &C
(|\varphi_{1}|^{2^{*}-2}+|\varphi_{2}|^{2^{*}-2})
|\varphi_{1}-\varphi_{2}|\\
&\leq&C
(||\varphi_{1}||_{*}^{2^{*}-2}+||\varphi_{2}||_{*}^{2^{*}-2})
||\varphi_{1}-\varphi_{2}||_{*}\Big(\sum_{j=1}^{k}\frac{\lambda^{\frac{N-2}{2}}}{(1+\lambda|y-\xi_{j}|)^{\frac{N-2}{2}+\tau}}\Big)^{2^{*}-1}.
\end{eqnarray*}
As before, we have
$$
\Big(\sum_{j=1}^{k}\frac{1}{(1+\lambda|y-\xi_{j}|)^{\frac{N-2}{2}+\tau}}\Big)^{2^{*}-1}\leq
C\sum_{j=1}^{k}\frac{1}{(1+\lambda|y-\xi_{j}|)^{\frac{N+2}{2}+\tau}}.
$$
Hence,
\begin{eqnarray*}\begin{array}{ll}
\|\mathcal {A}(\varphi_{1})-\mathcal
{A}(\varphi_{2})\|_{*}
&\leq C(\|\varphi_{1}\|_{*}^{2^{*}-2}+\|\varphi_{2}\|_{*}^{2^{*}-2})\|\varphi_{1}-\varphi_{2}\|_{*}\\
&\leq\frac{1}{2}\|\varphi_{1}-\varphi_{2}\|_{*}.\end{array}
\end{eqnarray*}
Therefore, $\mathcal {A}$ is a contraction map.

The case $N\le 5$ can be proved in a similar way.

\vskip8pt
Now by using the contraction mapping theorem, there exists a unique
$\varphi=\varphi_{ r,\lambda}\in \mathcal {N}$ such that
\eqref{A} holds. Moreover, by Lemmas \ref{lem2.2}, \ref{lem2.4} and \ref{lem2.5},  we deduce
\begin{eqnarray*}\begin{array}{ll}
\|\varphi\|_{*}&\leq\|L_{k}(N(\varphi))\|_{*}+\|L_{k}(l_{k})\|_{*}\\
&\leq C\|N(\varphi)\|_{**}+C\|l_{k}\|_{**}\\
&\leq
C\Big(\frac{1}{\lambda}\Big)^{\frac{N-2}4+\sigma}.\end{array}
\end{eqnarray*}
Moreover, we get the estimate of $c_{l}$ from \eqref{2.10}.
\end{proof}

\section{ The Proof of the Main theorem }\label{s3}

 We look
for a solution to \eqref{main} as
 $u=U_*+\varphi ,$   where $ \varphi= \varphi_k$ is the function obtained in Proposition~\ref{prop2.3}.
Let us introduce the energy functional whose critical points are solutions to \eqref{main}
\begin{equation}\label{a1}
I(u)=\frac{1}{2}\int_{\Omega}|\nabla u|^2-\frac{1}{2^{\ast}}\int_{\Omega}|u|^{2^{\ast}}.
\end{equation}
and the reduced energy
\begin{equation}\label{red-ene}
  I_k(\ell,r):=I(U_*+\varphi).\end{equation}

Where $$\lambda=\ell k^2,\ \ell\in [\eta ,\eta^{-1}  ]\ \hbox{for some $\eta>0$ small enough.}$$

 We have the following result
 \begin{proposition}\label{prop10}
 \begin{itemize}
 \item[(i)] $U_*+\varphi$  is a critical point of $I$ if and only if $(\ell,r)$ is a critical point of the reduced energy $I_k$
 \item[(ii)] We have
 $$I_k(\ell,r)=I(u_0)+kA +\frac1{k^{N-2}} F(\ell,r)+o\left(\frac1{k^{N-2}}\right)$$
 uniformly in compact sets of $(0,+\infty)\times(a,b)$, where
 $$F(\ell,r):=B{u_0(r)\over\ell^{N-2\over2}}-C{1\over r^{N-2}\ell^{N-2}}$$
 for some positive constants $A,$ $B$ and $C.$
 \end{itemize}
 \end{proposition}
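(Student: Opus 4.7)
For \textbf{Part (i)} I would follow the standard Ljapunov--Schmidt recipe: differentiating $I_k(\ell,r) = I(U_* + \varphi_{r,\lambda})$ with respect to $\ell$ and $r$, and using (a) the smooth dependence of $\varphi$ on $(r,\lambda)$, (b) the orthogonality constraint $\sum_j\int U_j^{2^*-2}Z_{j,l}\varphi = 0$, and (c) the non-degeneracy of the Gram matrix computed in \eqref{2.14'}, one reduces $\nabla_{(\ell,r)}I_k = 0$ to the vanishing of the Lagrange multipliers $c_1, c_2$ in \eqref{2.21}; equivalently, $U_* + \varphi$ solves \eqref{main}.

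For \textbf{Part (ii)} I would proceed in three steps. First, using the bound $\|\varphi\|_* \lesssim \lambda^{-(N-2)/4 - \sigma}$ from Proposition~\ref{prop2.3} and a Taylor expansion of $I$ around $U_*$ (exploiting that $I'(U_*+\varphi)$ vanishes modulo Lagrange multipliers), I would show
\begin{equation*}
I(U_* + \varphi) = I(U_*) + o(k^{-(N-2)}).
\end{equation*}
Second, setting $\Psi := \sum_j P_j$, integration by parts via $-\Delta u_0 = u_0^{2^*-1}$ gives
\begin{equation*}
I(U_*) = I(u_0) + \tfrac{1}{2}\int|\nabla\Psi|^2 - \tfrac{1}{2^*}\int\bigl[|u_0-\Psi|^{2^*} - u_0^{2^*} + 2^* u_0^{2^*-1}\Psi\bigr],
\end{equation*}
which isolates the $I(u_0)$ term. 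Third, splitting the bubble self-energy $\frac12\int|\nabla\Psi|^2 - \frac1{2^*}\int\Psi^{2^*} = \sum_j I(P_j) + \text{(bubble--bubble interaction)}$, the self-energies contribute $kA$ with $A := \frac1N\int U_{0,1}^{2^*}$, while the interaction, evaluated via the classical asymptotic $\int U_i^{2^*-1}U_j \sim C(\lambda|\xi_i-\xi_j|)^{-(N-2)}$ together with the polygon geometry $|\xi_i-\xi_j|=2r\sin(\pi(j-i)/k)$ and $\lambda=\ell k^2$, yields the $-C/(r^{N-2}\ell^{N-2})$ piece of $F$. Finally, the cross integrand $|u_0-\Psi|^{2^*} - u_0^{2^*} + 2^* u_0^{2^*-1}\Psi - \Psi^{2^*}$ is expanded in the near-bubble region (where $\Psi\gg u_0$, so $(\Psi-u_0)^{2^*} = \Psi^{2^*} - 2^* u_0 \Psi^{2^*-1} + O(u_0^2\Psi^{2^*-2})$); its leading part is $\int u_0\Psi^{2^*-1} \approx \sum_j u_0(\xi_j)\int U_j^{2^*-1}$, producing the $Bu_0(r)/\ell^{(N-2)/2}$ piece of $F$.

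The \textbf{main obstacle} is the precise control of error terms at the claimed scale $1/k^{N-2}$: the harmonic corrections $P_j - U_j \sim \lambda^{-(N-2)/2}H(\cdot,\xi_j)$ (with $H$ the regular part of Green's function), the contributions from non-adjacent pairs in the polygon, and the higher-order polynomial terms in the two-region expansion of $|u_0-\Psi|^{2^*}$ must all be verified to be genuinely subleading. The case $N=3$ is especially delicate because $\sum_{m=1}^{k-1}|\xi_1-\xi_m|^{-(N-2)}$ diverges logarithmically in $k$, forcing a careful matching between the near and far regions to isolate the correct leading behaviour.
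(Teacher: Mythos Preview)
Your proposal is correct and follows essentially the same route as the paper: the paper too dismisses (i) as standard, proves $I(U_*+\varphi)=I(U_*)+kO(\lambda^{-\frac{N-2}{2}-2\sigma})$ via Taylor expansion and Proposition~\ref{prop2.3}, and then expands $I(U_*)$ by writing $U_*=u_0-V$ with $V=\sum_jP_j$, splitting each $\Omega_j$ into the near-bubble region $S=\Omega_j\cap B_{\lambda^{-1/2}}(\xi_j)$ (where $V$ dominates and $\int_S V^{2^*-1}u_0$ supplies the $Bu_0(r)\ell^{-(N-2)/2}$ term) and its complement (where $u_0$ dominates), with the bubble self-energy plus pairwise interaction giving $kA-\frac{B_1k^{N-2}}{r^{N-2}\lambda^{N-2}}$. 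Your list of obstacles---projection corrections, non-adjacent pairs, and the $N=3$ logarithm---is exactly what the paper works through (the latter via the $\sigma_N$ notation in \eqref{e8}).
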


\begin{proof}
The proof of (i) is quite standard. We only prove (ii).
First of all we prove that
\begin{equation}\label{new1}
I(U_*+\varphi)= I(U_*) + k O\Bigl( \lambda^{-\frac{N-2}2 -2\sigma}\Bigr),\ \hbox{for some $\sigma<0.$}
\end{equation}
First of all, we have

\begin{equation}\label{10-2-3}
\begin{split}
I(U_*+\varphi)= &I(U_*) +\frac12\int_\Omega |\nabla \varphi|^2 + \int_\Omega \bigl( u_0^{2*-1} -\sum_{j=1}^k U_j^{2^*-1}\bigr)\varphi\\
&-\frac1{2^*} \int_\Omega \Bigl(  | U_*+\varphi|^{2^*} - | U_*|^{2^*}  \Bigr).
\end{split}
\end{equation}

It follows from \eqref{2.21} that

 \begin{equation}\label{n2-3-2.21}
\int_\Omega |\nabla \varphi|^2
=
\int_\Omega |U_*+\varphi|^{2^{*}-2}(U_*+\varphi)\varphi
-\int_\Omega\Bigl( u_0^{2^-1} -\sum_{j=1}^k U_j^{2^*-1}\Bigr)\varphi.
\end{equation}
Thus, we obtain

\begin{equation}\label{10-3-3}
\begin{split}
&I(U_*+\varphi)\\
= &I(U_*) +\frac12\int_\Omega |U_*+\varphi|^{2^{*}-2}(U_*+\varphi)\varphi + \frac12 \int_\Omega \bigl( u_0^{2*-1} -\sum_{j=1}^k U_j^{2^*-1}\bigr)\varphi\\
&-\frac1{2^*} \int_\Omega \Bigl(  | U_*+\phi|^{2^*} - | U_*|^{2^*}  \Bigr)\\
=&I(U_*)  + \frac12 \int_\Omega \Bigl( u_0^{2*-1} -\sum_{j=1}^k U_j^{2^*-1}-|U_*|^{2^*-2}U_*\Bigr)\varphi\\
&+ \frac12\int_\Omega \Bigl(|U_*+\varphi|^{2^{*}-2}(U_*+\varphi)- |U_*|^{2^{*}-2}U_*  \Bigr)\varphi \\
&-\frac1{2^*} \int_\Omega \Bigl(  | U_*+\varphi|^{2^*} - | U_*|^{2^*} - 2^* |U_*|^{2-2} U_*\varphi \Bigr).
\end{split}
\end{equation}

Write

\[
l_{k} = \Bigl[ |U_*|^{2^{*}-2}U_*-u_0^{2^*-1}
+\sum_{j=1}^{k} P_j^{2^{*}-1} \Bigr] + \sum_{j=1}^{k}\bigl( U_j^{2^*-1}- P_j^{2^{*}-1}\bigr).
\]

 It follows from Lemma~\ref{lem2.5}, there is a constant $\sigma>0$, such that
$$
\|l_{k}\|_{**}\leq C
\lambda^{ -\frac {N-2}4-\sigma   }.
$$

By Proposition~\ref{prop2.3}, we can obtain from \eqref{10-3-3} that if $N\ge 6,$

\begin{equation}\label{n10-4-3}
\begin{split}
&I(U_*+\varphi)\\
=&I(U_*)  + O\Bigl( \|l_k\|_{**}\|\varphi\|_{*} \Bigr)\sum_{j=1}^k \int_\Omega \Bigl(\sum_{j=1}^k\frac{\lambda^{\frac{N+2}2}}{(1+\lambda|y-x_j|)^{\frac{N+2}2+\tau}} \Bigr) \Bigl(\sum_{j=1}^k \frac{\lambda^{\frac{N-2}2}}{(1+\lambda|y-x_j|)^{\frac{N-2}2+\tau}}\Bigr) \\
&+ O\Bigl( \|\varphi\|^{2^*}_{*} \Bigr) \int_\Omega \Bigl(\sum_{j=1}^k \frac{\lambda^{\frac{N-2}2}}{(1+\lambda|y-x_j|)^{\frac{N-2}2+\tau}}\Bigr)^{2^*}\\
=& I(U_*) + k O\Bigl( \lambda^{-\frac{N-2}2 -2\sigma}\Bigr).
\end{split}
\end{equation}

While if $N\le 5$, then

\begin{equation}\label{n11-4-3}
\begin{split}
&I(U_*+\varphi)\\
=&I(U_*)  + O\Bigl( \|l_k\|_{**}\|\varphi\|_{*} \Bigr)\sum_{j=1}^k \int_\Omega \Bigl(\sum_{j=1}^k\frac{\lambda^{\frac{N+2}2}}{(1+\lambda|y-x_j|)^{\frac{N+2}2+\tau}} \Bigr) \Bigl(\sum_{j=1}^k \frac{\lambda^{\frac{N-2}2}}{(1+\lambda|y-x_j|)^{\frac{N-2}2+\tau}}\Bigr) \\
&+ O\Bigl( \|\varphi\|^{2^*}_{*} \Bigr) \int_\Omega \Bigl(\sum_{j=1}^k \frac{\lambda^{\frac{N-2}2}}{(1+\lambda|y-x_j|)^{\frac{N-2}2+\tau}}\Bigr)^{2^*}\\
&+ O\Bigl( \|\varphi\|^{2}_{*} \Bigr) \int_\Omega |U_*|^{2^*-3}\Bigl(\sum_{j=1}^k \frac{\lambda^{\frac{N-2}2}}{(1+\lambda|y-x_j|)^{\frac{N-2}2+\tau}}\Bigr)^{2}\\
=& I(U_*) + k O\Bigl( \lambda^{-\frac{N-2}2 -2\sigma}\Bigr).
\end{split}
\end{equation}
That concludes the proof of \eqref{new1}.

Next, we prove that
\begin{equation}\label{new2}
I(U_*)= I(u_0)+
k\left[A-\frac{B_1k^{N-2}}{r^{N-2}\lambda^{N-2}}+\frac{B_2u_0(r)}{\lambda^{\frac{N-2}{2}}}+O(\frac{1}{\lambda^{\frac{N-2}{2}(1+\delta)}})\right]
\end{equation}
where $A$, $B_1$, $B_2$ and are positive constants, $\delta>0$ is small.

  Recall that $P_j$ satisfies \eqref{projectedbubbles}
and
set  $V=\sum_{j=1}^k P_j$.  We have

\begin{equation}\label{e1}
\begin{array}{ll}
\quad \displaystyle\int_{\Omega}|\nabla U_*|^2\\
=\displaystyle\int_\Omega |\nabla u_0|^2+\int_\Omega |\nabla V|^2-\displaystyle 2\int_\Omega \nabla V\nabla u_0\\
=\displaystyle\int_\Omega |\nabla u_0|^2+\int_\Omega |\nabla V|^2-\displaystyle 2\int_\Omega u_0^{2^*-1}V.\\
\end{array}
\end{equation}

Let $\Omega_j=:\{(r \cos\theta, r\sin\theta, x')|\frac{2\pi (j-1)}{k}-\frac\pi k\le \theta\leq \frac{2\pi(j)}{k}+\frac\pi k, x'\in \mathbb{R}^{N-2}\}\cap \Omega, j= 1,..., k.$ Then by
the symmetry, we have

$$\int_\Omega u_0^{2^*-1} V =k\int_{\Omega_1} u_0^{2^*-1} V,$$
and

$$\int_\Omega |U_*|^{2^*}=k\int_{\Omega_1} |U_*|^{2^*}.$$

Let $S=:\Omega_1\cap B_{\lambda^{-\frac{1}{2}}}(\xi_1),$ by suing the following inequality:
$$|1-t|^p=1-pt+O(t^2)=1-pt+O(t^\alpha), 1<\alpha\leq 2, \forall 0\leq t\leq c,$$
where $c$ is some constant,
we obtain

\begin{equation}\label{e2}
\begin{array}{ll}
\quad \displaystyle\int_{S}| U_*|^{2^*}\\
=\displaystyle\int_S V^{2^*}-\displaystyle 2^*\int_S V^{2^*-1} u_0+O(\int_S v^{2^*-1-\delta}u_0^\delta)\\
=\displaystyle\int_S V^{2^*}-\displaystyle 2^*\int_S V^{2^*-1} u_0+O(\lambda^{-\frac{(1+\delta)(N-2)}{2}}),\\
\end{array}
\end{equation}
where $\delta>0$ is small.

On the other hand, we have
\begin{equation}\label{e3}
\begin{array}{ll}
\quad \displaystyle\int_{\Omega_1\setminus S}| U_*|^{2^*}\\
=\displaystyle\int_{\Omega_1\setminus S}u_0^{2^*}-\displaystyle 2^*\int_{\Omega_1\setminus S} u_0^{2^*-1} V+O(\int_{\Omega_1\setminus S} u_0^{2^*-2} V^{2})\\
=
\displaystyle\int_{\Omega}u_0^{2^*}-\displaystyle 2^*\int_{\Omega_1\setminus S} u_0^{2^*-1} V+O(\int_{\Omega_1\setminus S} u_0^{2^*-2} V^{2})+O(\lambda^{-\frac{N}{2}}),
\end{array}
\end{equation}
since
\begin{equation}\label{e4}
\begin{array}{ll}
\displaystyle\int_{\Omega_1\setminus S}u_0^{2^*}=\int_{\Omega_1} u_0^{2^*}+O(\lambda^{-\frac{N}{2}}).
\end{array}
\end{equation}

Note that, for any $y\in \Omega_0$, we have

\[
\sum_{j=2}^{k}\frac1{|y-\xi_j|^{N-2}}\le \frac C{|y-\xi_1|^{N-2-\tau }}\sum_{j=2}^{k} \frac1{|\xi_j-\xi_1|^\tau}\le \frac {C k}{|y-\xi_1|^{N-2-\tau }}
\]
for $\tau\in (0, 1)$.
So we obtain

\begin{equation}\label{e5}
\begin{split}
\int_{\Omega_1\setminus S}u_0^{2^*-2}V^{2}\le& C\int_{\Omega_1\setminus S} V^{2}\\
\le &\int_{\Omega_1\setminus S}(\sum_{j=1}^{k}\frac{1}{\lambda^{\frac{N-2}{2}}    |y-\xi_j|^{N-2}})^{2}\\
\le &\frac C{\lambda^{N-2}} \int_{\Omega_1\setminus S}
\Bigl( \frac1{|y-\xi_1|^{N-2}  }+  \frac k{|y-\xi_1|^{N-2-\tau}  }\Bigr)^2
\\
\le &\frac C{\lambda^{N-2}} \int_{\Omega_1\setminus S}
\Bigl( \frac1{|y-\xi_1|^{2(N-2)}  }+  \frac k{|y-\xi_1|^{2(N-2-\tau)}  }\Bigr)
\\
\le &  \frac C{\lambda^{N-2}}
\Bigl(\lambda^{\frac12( N-4  )}+ k^2 \lambda^{\frac12( N-4 -2\tau )}
\Bigr)\\
\le &\frac C{\lambda^{\frac N2- 1+2\tau}},
\end{split}
\end{equation}
since  $k^2 \sim \lambda$.
As a consequence,
\begin{equation}\label{e6}
\begin{split}
&\int_{\Omega_1\setminus S}| U_*|^{2^*}\\
=&\int_{\Omega_1}u_0^{2^*}- 2^*\int_{\Omega_1\setminus S} u_0^{2^*-1} V+ O(\frac{1}{\mu^{\frac{N-2}{2}(1+\delta)}}).
\end{split}
\end{equation}

Combining the above obtained results, we get

\begin{equation}\label{nne6}
\begin{array}{lll}
I(U_*)&=I(u_0)+\displaystyle \frac{1}{2}\int_\Omega|DV|^2-\frac{k}{2^*}\int_S V^{2^*}+k\int_S V^{2^*-1} u_0\\
&\quad-\displaystyle \int_{\Omega} u_0^{2^*-1} V+k\int_{\Omega_1\setminus S}u_0^{2^*-1}V+ O(\frac{k}{\lambda^{\frac{N-2}{2}(1+\delta)}}).
\end{array}
\end{equation}

Now we compute those integrals in \eqref{nne6} one by one:
\begin{equation}\label{e7}
\begin{array}{ll}
\quad -\displaystyle\int_\Omega u_0^{2^*-1}V +k\int_{\Omega_1\setminus S} u_0^{2^*-1}V\\
=\displaystyle -k\int_{\Omega_1} u_0^{2^*-1}V+k\int_{\Omega_1\setminus S} u_0^{2^*-1}V\\
=-k\displaystyle\int_{S} u_0^{2^*-1}V\\
=O(k\displaystyle\int_S\frac{1}{\lambda^{\frac{N-2}{2}}}\sum_{j=1}^{k}\frac{1}{|y-\xi_j|^{N-2}})\\
= O(k\displaystyle\int_S\frac{1}{\lambda^{\frac{N-2}{2}}}\bigl(
\frac{1}{|y-\xi_1|^{N-2}}+  \frac{k}{|y-\xi_1|^{N-2-\tau}} )
\\
=O\displaystyle\Bigl(\frac{k}{\lambda^{\frac{N-2}{2}}}\bigl( \frac1\lambda+ \frac k{\lambda^{1+\frac\tau2}}\big)\Bigr)=
O\displaystyle\Bigl(\frac{k}{\lambda^{\frac{(1+\delta)(N-2)}{2}}}\Bigr).
\end{array}
\end{equation}

We have that for any $y\in S$

\[
\begin{split}
\sum_{j=2}^{k}\frac{ P_j( \xi_1+\lambda^{-1} y)}{\lambda^{\frac{N-2}2}}\le& C\sum_{j=2}^{k}\frac{ 1}{( 1+ |y - \lambda(\xi_j-\xi_1)|)^{N-2}}\\
\le
&C\sum_{j=2}^{k}\frac{ 1}{| \lambda(\xi_j-\xi_1)|^{N-2}}\\
\le& \frac{ C |\ln k|^{\sigma_N}k^{N-2}}{\lambda^{N-2}}\le
\frac{ C |\ln k|^{\sigma_N}}{\lambda^{\frac{N-2}2}},
\end{split}
\]
where $\sigma_N=0$ if $N\ge 4$ and $\sigma_3=1, $ if $N=3$. So

\[
\begin{split}
&\Bigl|\Bigl(\frac{ V( \xi_1+\lambda^{-1} y)}{\lambda^{\frac{N-2}2}}\Bigr)^{2^*-1}
  -\Bigl(\frac{ P_1( \xi_1+\lambda^{-1} y)}{\lambda^{\frac{N-2}2}}\Bigr)^{2^*-1}\Bigr|\\
  \le & C \Bigl(  \frac1{ (1+|y|)^4   }\frac{  |\ln k|^{\sigma_N} }{\lambda^{\frac{N-2}2}}+ \frac{  |\ln k|^{(2^*-1)\sigma_N}}{\lambda^{\frac{N+2}2}}\Bigr).
\end{split}
\]

Thus, we have

\begin{equation}\label{e8}
\begin{split}
&\int_S V^{2^*-1}u_0\\
=&\frac1{\lambda^{\frac{N-2}2}}\int_{B_{\sqrt\lambda}(0)}\Bigl(\frac{ V( \xi_1+\lambda^{-1} y)}{\lambda^{\frac{N-2}2}}\Bigr)^{2^*-1}u_0( \xi_1+\lambda^{-1} y)
\\
  =&\frac1{\lambda^{\frac{N-2}2}}\int_{B_{\sqrt\lambda}(0)}\Bigl(\frac{ P_1( \xi_1+\lambda^{-1} y)}{\lambda^{\frac{N-2}2}}\Bigr)^{2^*-1}u_0( \xi_1+\lambda^{-1} y)
 \\
   &+\frac1{\lambda^{\frac{N-2}2}}\int_{B_{\sqrt\lambda}(0)}\Bigl[ \Bigl(\frac{ V( \xi_1+\lambda^{-1} y)}{\lambda^{\frac{N-2}2}}\Bigr)^{2^*-1}
  -\Bigl(\frac{ P_1( \xi_1+\lambda^{-1} y)}{\lambda^{\frac{N-2}2}}\Bigr)^{2^*-1}
  \Bigr]u_0( \xi_1+\lambda^{-1} y)
 \\
=& \frac1{\lambda^{\frac{N-2}2}}\int_{B_{\sqrt\lambda}(0)}\Bigl(\frac{ P_1( \xi_1+\lambda^{-1} y)}{\lambda^{\frac{N-2}2}}\Bigr)^{2^*-1}u_0( \xi_1+\lambda^{-1} y)
\\
&+\frac1{\lambda^{\frac{N-2}2}}  O\Bigl( \lambda^{\frac{N-4}2}
 \frac{  |\ln k|^{\sigma_N} }{\lambda^{\frac{N-2}2}}+\lambda^{\frac{N}2} \frac{  |\ln k|^{(2^*-1)\sigma_N}}{\lambda^{\frac{N+2}2}}\Bigr)
 \\
 =& \frac1{\lambda^{\frac{N-2}2}}\int_{B_{\sqrt\lambda}(0)}\Bigl( U(y) +O\bigl( \frac1{\lambda^{N-2}}   \bigr)
\Bigr)^{2^*-1}\Bigl( u_0( \xi_1)+O\bigl(\frac1\lambda\bigr)\Bigr)
\\
&+\frac1{\lambda^{\frac{N-2}2}}  O\Bigl( \lambda^{\frac{N-4}2}
 \frac{  |\ln k|^{\sigma_N} }{\lambda^{\frac{N-2}2}}+\lambda^{\frac{N}2} \frac{  |\ln k|^{(2^*-1)\sigma_N}}{\lambda^{\frac{N+2}2}}\Bigr)\\
 =& \frac{ u_0( \xi_1)}{\lambda^{\frac{N-2}2}}\Bigl( \int_{\mathbb R^N} U^{2^*-1} + O\bigl(\frac1{\lambda^\delta}  \bigr)\Bigr).
\end{split}
\end{equation}

Finally, it is standard to prove

\begin{equation}\label{e9}
\begin{array}{ll}
\quad \displaystyle\frac{1}{2}\int_\Omega|\nabla V|^2-\frac{k}{2^*}\int_S V^{2^*}=k \Bigl(\frac{1}{2}\int_{\Omega_0}|\nabla V|^2-\frac{k}{2^*}\int_S V^{2^*} \Bigr)
\\
=\displaystyle k[\int_{\mathbb{R}^N}|\nabla U_{0, 1}|^2-\frac{1}{2^*}\int_{\mathbb{R}^N}U_{0,1}^{2^*}-\sum_{j=2}^k\frac{B_0}{\lambda^{N-2}|x_j-x_1|}+O(\frac{1}{\lambda^{\frac{N-2}{2}(1+\delta)}})]\\
=k[A-\frac{B_1k^{N-2}}{\lambda^{N-2}r_0^{N-2}}+O(\frac{1}{\lambda^{\frac{N-2}{2}(1+\delta)}})]
\end{array}
\end{equation}

Combining the above obtained results, we get \eqref{new2}.
\\

Finally, the claim follows by the choice of $\lambda$ in \eqref{para}.
 \end{proof}

We are now ready to prove the main theorem.

\begin{proof}[Proof of Theorem~\ref{thm:scaled}: completed]   We apply Proposition~\ref{prop10}.
It is easy to check that
  $F$ has a maximum point at the point $(\ell_0,r_0)$ where $r_0$ maximizes the function $r\to r^{N-2\over 2}u_0(r)$ and  $\ell_0:=\left(\frac {2B }{ C u_0(r_0) r_0^{N-2}}\right)^{\frac2{N-2}}$,
 which is stable under $C^0-$perturbation. Therefore, the reduced energy $I_k$ has a critical point
  $(\ell_k, r_k),$ which produces the solution $U_*+\phi$ to the problem \eqref{main}.

\end{proof}

%\section{Appendix}
%\begin{appendix}

\appendix
  \renewcommand{\appendixname}{Appendix~\Alph{section}}
\numberwithin{equation}{section}

\section{Non-degeneracy of the positive radial solution}
Without loss of generality we can assume that the annulus is $\A_R:=\{x\in\mathbb R^n\ :\ R\le|x|\le 1\}$
(i.e. $a=R$ and $b=1$).\\

Let $u_R$ be the unique positive radial solution to the following problem:
\begin{equation}\label{1}\left\{\begin{array}{rll}
 -\Delta u &=  u ^p\ &\hbox{in}\ \A_R ,\\
 \ u &=0\ &\hbox{on}\ \partial\A_R .\end{array}\right.
\end{equation}
Here we set
$p:={N+2\over N-2},$ $N\ge3.$

\begin{proposition}\label{prop:non}
There exists a sequence of radii $(R_k)_{k\in\mathbb N},$    such that $u_R$ is non-degenerate for any $R\not = R_k.$
\end{proposition}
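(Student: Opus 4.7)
The plan is to analyze the linearized operator
\begin{equation*}
L_R := -\Delta - p\, u_R^{p-1},\qquad p = \frac{N+2}{N-2},
\end{equation*}
acting on $H^1_0(\A_R)$, and show that $\ker L_R \neq \{0\}$ only for $R$ in a discrete subset of $(0,1)$. First I would decompose any $v \in \ker L_R$ into spherical harmonics, $v(x) = \sum_{m\ge 0}\sum_i \phi_{m,i}(r)Y_{m,i}(\theta)$, where the $Y_{m,i}$ are eigenfunctions of $-\Delta_{\Sn^{N-1}}$ with eigenvalue $m(m+N-2)$. Each radial profile $\phi = \phi_{m,i}$ then solves the singular Sturm--Liouville problem
\begin{equation*}
-\phi'' - \frac{N-1}{r}\phi' + \Bigl[\frac{m(m+N-2)}{r^2} - p\,u_R^{p-1}(r)\Bigr]\phi = 0,\quad r\in(R,1),\quad \phi(R) = \phi(1) = 0.
\end{equation*}
If $\{\mu_{m,j}(R)\}_{j\ge 1}$ denote the ordered eigenvalues of the corresponding ODE operator $L_{R,m}$, then $u_R$ is degenerate iff $\mu_{m,j}(R) = 0$ for some $(m,j)$.

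Next I would truncate in $m$: once $m(m+N-2) > p\,\|u_R\|_\infty^{p-1}$ on $\A_R$, the centrifugal term dominates the potential, $L_{R,m}$ is strictly positive, and so contributes no kernel. Since $\|u_R\|_\infty$ is locally bounded on $(0,1)$ (by elliptic estimates together with the Ni--Nussbaum uniqueness of $u_R$), on each compact subinterval $[R_0,R_1]\subset(0,1)$ there is a uniform $m_0$ such that only $0 \le m \le m_0$ can produce a kernel element.

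For each such remaining mode I would then show that $R \mapsto \mu_{m,j}(R)$ is real-analytic on $(0,1)$ and not identically zero. Analyticity is obtained by pulling back the equation to the fixed interval $(0,1)$ via $s = (r-R)/(1-R)$, which turns $L_{R,m}$ into an analytic family in $R$; combined with the analytic dependence of $u_R$ on $R$ (ODE implicit function theorem applied to the shooting parameter, using Ni--Nussbaum uniqueness to avoid radial-mode degeneracies), Kato's analytic perturbation theorem then yields analyticity of each $\mu_{m,j}(R)$. To show non-identical vanishing I would use a thin-annulus limit $R\to 1^-$: under the rescaling $r = R + (1-R)s$ and $u_R = (1-R)^{-(N-2)/2}U$, the rescaled operator carries a factor $(1-R)^{-2}$, so every eigenvalue $\mu_{m,j}(R)$ diverges like $(1-R)^{-2}\to +\infty$ and is in particular nonzero for $R$ close to $1$.

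Combining: by analyticity the zero set of each $\mu_{m,j}$ is discrete in $(0,1)$. Exhausting $(0,1)$ by an increasing sequence of compact subintervals and taking the countable union of finitely many discrete zero sets on each one produces a countable sequence $(R_k)$ outside of which $u_R$ is nondegenerate. The main obstacle is the rigorous justification of the two analytic-family ingredients: (i) analyticity of $R \mapsto u_R$ (which needs a careful use of the implicit function theorem for the radial ODE, avoiding the very degeneracies we are trying to classify), and (ii) the thin-annulus limit in the critical regime, to make sure no $\mu_{m,j}$ happens to vanish identically. An alternative to (ii) would be to exhibit a single radius $R^*$ at which all modes can be checked to be nondegenerate (e.g., via a Morse-index or symmetry reduction in a very thin annulus where $u_R$ is close to a one-dimensional profile), which by analyticity forces each $\mu_{m,j}$ to be nontrivial.
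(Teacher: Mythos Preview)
Your overall strategy---spherical-harmonic decomposition, reduction to a family of radial Sturm--Liouville problems, and analyticity of the eigenvalue branches in $R$ via Kato/Dancer---is exactly the one the paper follows. But your thin-annulus step, which is the crucial non-vanishing input, is wrong as written.

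You assert that after the rescaling $r=R+(1-R)s$, $u_R=(1-R)^{-(N-2)/2}U$, ``the rescaled operator carries a factor $(1-R)^{-2}$, so every eigenvalue $\mu_{m,j}(R)$ diverges like $(1-R)^{-2}\to+\infty$.'' This overlooks that the potential $p\,u_R^{p-1}$ scales by the \emph{same} factor $(1-R)^{-2}$, so the rescaled operator does not tend to a bare Laplacian but to the one-dimensional linearized operator $-\partial_s^2-pU^{p-1}$ on a fixed interval. The first eigenvalue $\nu_1$ of that limit operator is strictly negative (Morse index one of the 1D ground state), hence $\mu_{m,1}(R)\sim(1-R)^{-2}\nu_1\to-\infty$, not $+\infty$; the paper in fact quotes exactly this limit from Lin \cite{l}. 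Your argument can be repaired by checking that \emph{all} the limit eigenvalues $\nu_j$ are nonzero---equivalently, that the positive solution of $-U''=U^p$ on a fixed interval is non-degenerate, which is an elementary shooting computation---so that each $\mu_{m,j}(R)$ still diverges in absolute value and is therefore eventually nonzero. You do not supply this, and your alternative suggestion (``exhibit a single radius $R^*$'') amounts to the same missing lemma.

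The paper sidesteps this by importing two structural facts from Lin \cite{l}: $\mu_{1,1}(R)<0$ for all $R$, and $\mu_{k,j}(R)>0$ for all $k\ge1$, $j\ge2$ and all $R$. This leaves only the first eigenvalue $\mu_{k,1}(R)$ for each $k\ge2$; then $\mu_{k,1}(R)\to-\infty$ as $R\to1$ together with analyticity gives a discrete zero set for each $k$, and the countable union over $k$ is the exceptional sequence. This makes both your compact-exhaustion truncation in $m$ and any analysis of the limit 1D spectrum unnecessary.

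One further slip: to get analytic dependence of $u_R$ on $R$ via the implicit function theorem you need radial \emph{non-degeneracy} (the $m=0$ mode has trivial kernel), not merely Ni--Nussbaum uniqueness. The paper cites this separately from \cite{bcgp}.
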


\begin{proof}
\begin{itemize}
\item[(i)]
Let us consider the following linear problem:

\begin{equation}\label{2} \left\{\begin{array}{rll}-\Delta v &= p u_R ^{p-1}v\ &\hbox{in}\ \A_R ,\\
 v &=0\ &\hbox{on}\ \partial\A _R .\end{array}\right.
\end{equation}

We denote by $\lambda_k= k(k+n-2)$ for $k=0,1,2,...$ the eigenvalues of $-\Delta$ on the sphere $\mathbb S^{n-1}.$
Let $\{\Phi_i^k\ :\ 1\le i\le m_k\}$ denote a basis for the $k^{th}$ eigenspace of $-\Delta.$
Then for any function $v=v(r,\theta)$ on the annulus $\mathcal A_R$ we may write
\begin{equation}\label{21}v(r,\theta)=\sum_{k\ge0 } a_{ k}(r)\tilde \Phi_i^k(\theta),\quad r\in(1,R),\ \theta\in\mathbb S^{n-1},\end{equation}
   where each $a_k$ is a radial solution to
   \begin{equation}\label{3}\left\{\begin{array}{ll} a_k^{\prime\prime}+{n-1\over r}a_k^{\prime}+\left(p u_R^{p-1} (r)-{\lambda_k\over r^2}\right)a_k(r) = 0\ \hbox{in}\ (R,1) ,\\
    a_k(R)=a_k(1) =0,\end{array}\right.
\end{equation}
and
   $$\tilde \Phi_i^k(\theta)=\sum\limits_{i=1}^{m_k} c_i\Phi_i^k(\theta), \quad \hbox{for some}\ c_i\in\mathbb R.$$

  \item[(ii)] Argue as in Proposition 2.1 in \cite{bcgp}, we have that
   \begin{equation}\label{4}
   a_0(r) =0\ \hbox{for any}\ r\in(R,1).
   \end{equation}
   It means that $u_0$ is non-degenerate in the space of radial functions.

\item[(iii)] For any integer $k\ge1,$ let $ {\mu_ k}_i={\mu_ k}_i(R),$ ${i\ge1}$ be the sequence of the eigenvalues
of the problem:
\begin{equation}\label{5}\left\{\begin{array}{ll}\phi^{\prime\prime}+{n-1\over r}\phi^{\prime}+\left(p u_R^{p-1} (r)-{\lambda_k\over r^2}\right)\phi=-{\mu_ k}_i\phi\ \hbox{in}\ (R,1) ,\\
 \phi(R)=\phi(1) =0.\end{array}\right.
\end{equation}
We point out that if
\begin{equation}\label{6}
{\mu_ k}_i(R)\not=0\ \hbox{for any $k\ge1$ and $i\ge1$},
\end{equation}
then any solutions to \eqref{3} $a_k\equiv0.$

So by \eqref{21} (together with \eqref{4}) we deduce that any solutions to \eqref{2} $v\equiv0$, i.e. $u_R$ is non-degenerate.

\item[(iv)] By Corollary  2.4  in \cite{l} we get
\begin{equation}\label{7}{\mu_ 1}_1 (R)<0\ \hbox{and}\
{\mu_ k}_i (R)>0 \ \hbox{for  $k\ge1$ and $i\ge2$},\quad \hbox{for any $R\in(0,1)$}.
\end{equation}
It only remains to check the behavior of the first eigenvalue ${\mu_k}_1(R)$ for any $k\ge2.$
We know  by Lemma 3.1 in   \cite{l} that
$$\lim\limits_{R\to1}{\mu_k}_1(R)=-\infty,\ \hbox{for any}\ k\ge1.$$

\item[(v)]
If $\phi$ solves \eqref{5} then
$\psi(t)=\phi(t(1-R)+2R-1)$ solves the following problem:
\begin{equation}\label{51}
\left\{\begin{array}{ll}
\psi^{\prime\prime}+{(n-1)(1-R)\over t(1-R)+2R-1}\psi^{\prime}+(1-R)^2\left(p w_R^{p-1} (t)-{\lambda_k\over \(t(1-R)+2R-1\)^2}\right)\psi=\lambda_k\psi,\ \hbox{in}\ (1,2), \\
\psi(1)=\psi(2) =0.
\end{array}\right.
\end{equation}
Where
$$\lambda_k=\lambda_k(R):=-(1-R)^2{\mu_k}_1(R).$$
On the other hand, we see that  $w_R(t)=u_R(t(1-R)+2R-1)$ solves the following problem:
\begin{equation}\label{52} \left\{\begin{array}{ll} w_R^{\prime\prime}+{(n-1)(1-R)\over t(1-R)+2R-1}w_R^{\prime}+(1-R)^2 w_R^{p } (t)=0\ \hbox{in}\ (1,2) ,\\
 w_R(1)=w_R(2) =0  .
\end{array}\right.\end{equation}

\item[(vi)] We claim that

{\em
for any $k\ge2$ there exists a finite number of radii ${R_k}_1,\dots,{R_k}_{\ell(k)}$ such that
$\lambda_k({R_k}_i)=0$ for $i=1,\dots,{\ell(k)}.$}

The proof for the claim  could follow  the same arguments as in Lemma 2.2 (c) of \cite{dw}.
Indeed,
 using a result due to Kato (see Example 2.12, page 380 in \cite{k}),   we could prove that each function $R\to\lambda_k(R)$ is analytic so it can only vanish at a finite number of points. We can prove that the function $W:(0,1)\to C^2(I)$, $I=[1,2],$ defined by
$W(R)(t)=w_R(t)  $ is analytic using the same arguments developed by  Dancer in\cite{d}.

     \end{itemize}
\end{proof}
%\end{appendix}

\bibliographystyle{springer}
%\bibliography{mrabbrev,literatur}
\newcommand{\noopsort}[1]{} \newcommand{\printfirst}[2]{#1}
\newcommand{\singleletter}[1]{#1} \newcommand{\switchargs}[2]{#2#1}

\end{document}